\renewcommand{\d}{\partial }
\newtheoremstyle{dotless}{}{}{\itshape}{}{\bfseries}{}{ }{}
\DeclareMathOperator\stab{St}
\newtheorem{Theorem}{Theorem}[section]
\newtheorem{Corollary}[Theorem]{Corollary} 
\newtheorem{Lemma}[Theorem]{Lemma}
\newtheorem*{Conjecture1}{Singer conjecture} 
\newtheorem*{Conjecture2}{Action dimension (actdim) conjecture} 
\newtheorem*{Conjecture3}{Cocompact action dimension (cadim) conjecture} 
\newtheorem*{Cadimm}{Cadim conjecture in dimension $n$} 
\newtheorem*{Conjecture5}{Weak cadim conjecture} 
\theoremstyle{definition} 
\newtheorem*{Definition}{Definition} 
\theoremstyle{remark} 
\newtheorem*{Remark}{Remark}
\newcommand{\cE}{\mathcal{E}}
\newcommand{\cU}{\mathcal{U}}
\DeclareMathOperator\cadim{cadim}
\DeclareMathOperator\actdim{actdim}
\begin{document}

\author{Boris Okun} 
\email{okun@uwm.edu} 
\author{Kevin Schreve} 
\email{kschreve@uwm.edu}

\title{The $L^2$-(co)homology of groups with hierarchies} 

\date{} 
\keywords{Singer conjecture, Haken n-manifolds, Coxeter groups, hierarchy, aspherical manifold} \subjclass{20F65, 20F36, 20F67, 20J06, 16S34, 55N25}
\begin{abstract}
	We study group actions on manifolds that admit hierarchies, which generalizes the idea of Haken $n$-manifolds introduced by Foozwell and Rubinstein.
	We show that these manifolds satisfy the Singer conjecture in dimensions $n \le 4$.
	Our main application is to Coxeter groups whose Davis complexes are manifolds; we show that the natural action of these groups on the Davis complex has a hierarchy.
	Our second result is that the Singer conjecture is equivalent to the cocompact action dimension conjecture, which is a statement about all groups, not just fundamental groups of closed aspherical manifolds.
\end{abstract}

\maketitle

\section*{Introduction}

In his Ph.D. thesis~\cite{f07}, Foozwell introduced Haken $n$-manifolds as a higher dimensional analogue of Haken $3$-manifolds. 
Loosely speaking, these are closed $n$-manifolds that can be cut inductively along codimension one submanifolds to a disjoint union of $n$-balls.   
The exact definition is somewhat technical. 
The resulting sequence of manifolds is called a hierarchy. 
Foozwell and Rubinstein have explored many properties of these manifolds, in particular, they have shown~\cite{f11a,fr11} that their universal covers are homeomorphic to $\mathbb{R}^n$ and their fundamental groups have solvable word problem.  
Both these properties show that Haken $n$-manifolds are a special class of aspherical manifolds \cites{d83,m90}.

The classical Euler characteristic conjecture, attributed to Hopf, predicts the sign of the Euler characteristic of a closed aspherical $2n$-dimensional manifold $M^{2n}$: $(-1)^{n}\chi(M^{2n}) \ge 0$. 
In a special case of right-angled Coxeter group manifolds, this conjecture becomes a purely combinatorial statement about flag simplicial triangulations of $(2n-1)$-spheres, known as the Charney--Davis conjecture~\cite{cd95b}.

Another classical conjecture about aspherical manifolds, the Singer conjecture, predicts that 
the reduced $L^2$-homology of the universal cover vanishes except possibly in the middle dimension. 
Since one can use $L^2$-Betti numbers to compute $\chi$, the Singer conjecture immediately implies the Euler characteristic conjecture.

Edmonds~\cite{e13} proved the Euler characteristic conjecture for closed Haken $4$-manifolds by showing that it was equivalent to the Charney--Davis conjecture for $3$-spheres, which holds true by a result of Davis and the first author~\cite{do01}, where the Singer conjecture for $4$-dimensional right-angled Coxeter group manifolds is proved.
This equivalence was extended by Davis and Edmonds~\cite{de14} to all even dimensions. 
In fact, they showed this equivalence for \emph{generalized Haken $2n$-manifolds}, where they allow the hierarchy to end in any compact, contractible manifold.

The starting point of this paper was a question of Edmonds whether the Singer conjecture holds for Haken $4$-manifolds. 

One advantage of studying homological properties of Haken $n$-manifolds is that we can ignore most of the technicalities and study a more general class of manifolds that is closer to the loose definition above. 
Since we are interested in group actions that are not free, and because we think it is simpler, we build the hierarchies out of contractible manifolds with a proper and cocompact group action.

We say a group $G$ \emph{admits a hierarchy} if it acts on a contractible manifold $M$ that can be cut inductively along codimension one contractible $G$-invariant submanifolds to a disjoint union of compact, contractible manifolds.
An example to keep in mind is $\mathbb{Z}^n$ acting on $\mathbb{R}^n$ with quotient the $n$-torus $T^n$. 
Cutting $T^n$ along $T^{n-1}$ corresponds to cutting along $\mathbb{Z}^n$-translates of $\mathbb{R}^{n-1}$ inside $\mathbb{R}^n$. 
In a similar way, hierarchies for Haken $n$-manifolds lift to our hierarchies on the universal covers.

The paper is organized as follows.
We develop a general theory of group actions with hierarchies in Section~\ref{s:hier}.
In Section~\ref{s:cox} we prove that Coxeter group manifolds admit hierarchies. 
In Section~\ref{s:l2} we briefly recall the necessary background material on $L^{2}$-homology.
Finally, in Section~\ref{s:vanish} we study various vanishing conjectures about $L^{2}$-homology.

Our first result is that the Singer conjecture holds for all groups that admit a hierarchy in dimension $4$. 
Our main application of this result is to Coxeter groups:
Theorem \ref{hierarchy} generalizes the result in \cite{do01} for right-angled Coxeter groups and a later result of T. Schroeder~\cite{s09} for even Coxeter groups. 

We also introduce the notion of \emph{cocompact action dimension} of a group --- the minimal dimension of a contractible manifold, possibly with boundary, which admits a proper cocompact action by the group. 
Our second result is that the Singer conjecture is actually a statement about all groups, not just about fundamental groups of closed aspherical manifolds. 
Namely, we show that the Singer conjecture is equivalent to the cocompact action dimension conjecture: the $L^{2}$-homology of a group vanishes above half of its  cocompact action dimension.

We are grateful to Mike Davis for sending us an early version of~\cite{de14} and several useful discussions.

\section{Hierarchies for group actions} \label{s:hier}
\begin{Definition}
	Let $G$ be a discrete group.
	A \emph{$G$-space} $M$ is a space with a geometric $G$-action.
	If $N$ is a $G$-invariant subspace of $M$ then $(M,N)$ is a \emph{pair of $G$-spaces.} 
\end{Definition}

\begin{Definition}
Let $M$ be a $G$-manifold, and $\cE= \{E_i\}_{i=0}^r$ a collection of codimension one $G$-submanifolds. 
$(M,\cE)$ is \emph{tidy} if 
	\begin{itemize}		
		\item The components of $M$ are contractible.		
		\item The components of any intersection of $E_{i}$'s are contractible.		
		\item $E_{i} \cap \partial M = \partial E_{i}$ for all $i$.
		\item $(M,\cE)$ locally looks like a real hyperplane arrangement: every point  in the interior of $M$ (boundary of $M$, resp.) has a chart which maps the $E_{i}$'s into linear codimension 1 subspaces in $\mathbb{R}^n$ ($\mathbb{R}_{+}^n$, resp.)
	\end{itemize}
\end{Definition}
In the case where $\cE$ consists of just one submanifold $F$, this definition is equivalent to requiring that $F$ is a neat submanifold and the components of both $M$ and $F$ are contractible. 
We will call such a pair $(M,F)$ a \emph{tidy pair}.

Note that since the components of $F$ are contractible, it admits a $G$-invariant collar neighborhood, which we denote $F\times I$.  
By \emph{cutting} $M$ along $F$ we mean removing the interior of this collar, so that we have a decomposition 
\[
	M = N \cup_{F \times \partial I} F\times I.
\]
where $N$ is $M$ \emph{cut-open} along $F$.  
Associated to this decomposition there is a Mayer--Vietoris sequence
\begin{equation}\label{e:MV}
	\rightarrow H_k(F \times \partial I) \rightarrow H_k(F \times I) \oplus H_k(N) \rightarrow H_k(M) \rightarrow
\end{equation}

\begin{Lemma}\label{cutting}
	If $(M,F)$ is a tidy pair and $N$ is $M$ cut-open along $F$, then the components of $N$ are contractible manifolds.
\end{Lemma}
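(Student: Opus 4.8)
The plan is to show that each component of $N$ is a manifold that is both simply connected and acyclic, and then to upgrade acyclicity to contractibility via the Hurewicz and Whitehead theorems. So the proof splits into a local (manifold) step, a homology step, a $\pi_1$ step, and a final homotopy-theoretic conclusion.

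First I would check that $N$ is a manifold, with corners along $\partial M \cap F$. This is entirely local: the charts guaranteed by tidiness present a neighborhood of $F$ as a linear hyperplane in $\mathbb{R}^n$ (or $\mathbb{R}^n_+$), and deleting the interior of the $G$-invariant collar $F \times I$ replaces such a neighborhood by two half-neighborhoods. Hence every point of $N$ retains a Euclidean or half-space chart, so $N$ is a manifold and $\partial N$ contains the two copies $F \times \{0,1\}$. Next, for homology I would feed the contractibility hypotheses into the Mayer--Vietoris sequence \eqref{e:MV}. Since the components of $F$ and of $M$ are contractible, we have $\widetilde H_k(F \times \partial I) = \widetilde H_k(F \times I) = \widetilde H_k(M) = 0$ for all $k \geq 1$, and the sequence then forces $\widetilde H_k(N) = 0$ for all $k \geq 1$. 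Thus every component of $N$ is acyclic.

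The crux is the fundamental group. Writing $M = N \cup_{F \times \partial I} (F \times I)$ and using that each slice of the collar is contractible, van Kampen's theorem presents $\pi_1$ of each component of $M$ as the fundamental group of a graph of groups whose vertices are the components of $N$, whose edges are the components of $F$ (each edge joining the two sides $F \times \{0\}$ and $F \times \{1\}$), and whose edge groups are trivial. Here contractibility of $M$ does the work: a graph of groups with trivial edge groups has fundamental group the free product of its vertex groups with a free group of rank equal to the first Betti number of the underlying graph. Since $\pi_1(M) = 1$, the underlying graph of each component must be a tree and every vertex group must be trivial. In particular, no component of $F$ can have both of its sides in the same component of $N$, so cutting genuinely separates, and each component of $N$ is simply connected. (A component of $N$ not meeting $F$ is simply a component of $M$, hence already contractible.)

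Finally, a simply connected acyclic CW complex is contractible by Hurewicz and Whitehead, which completes the argument. I expect the only real difficulty to be the bookkeeping in the $\pi_1$ step: ruling out the non-separating case and correctly reading off the graph-of-groups decomposition when $F$ and $M$ have several components. One could equivalently package the homology and $\pi_1$ inputs together by observing that the collar decomposition exhibits $M$ as the homotopy pushout of $F \xleftarrow{\nabla} F \sqcup F \rightarrow N$, so that each component of $M$ is homotopy equivalent to a wedge, taken along a tree, of the components of $N$; a wedge is contractible precisely when every summand is, which forces each component of $N$ to be contractible.
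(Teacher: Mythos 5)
Your proposal is correct and follows essentially the same route as the paper: the paper's (one-sentence) proof likewise deduces simple connectivity of the components of $N$ from van Kampen applied to the decomposition $M = N \cup_{F\times\partial I} F\times I$ and acyclicity from the Mayer--Vietoris sequence~\eqref{e:MV}, with Hurewicz--Whitehead implicit. Your elaboration of the graph-of-groups bookkeeping (trivial edge groups forcing a tree with trivial vertex groups) is a correct filling-in of the details the paper leaves to the reader.
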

Notice that $N$ may or may not have more $G$-orbits of components than does $M$.
\begin{proof}
	Using the above decomposition, the Van Kampen theorem implies that components of $N$ are simply connected, and the Mayer--Vietoris sequence~\eqref{e:MV} shows that $N$ is acyclic.
\end{proof}

\begin{Definition}
	An \emph{n-hierarchy} for an action of a discrete group $G$ on a manifold $M$ is a sequence 
	\[
		(M_0,F_0), (M_1,F_1), \dots , (M_m,F_m), (M_{m+1},\emptyset),
	\]
	such that 
	\begin{itemize}
		\item $M_0 = M$.
		\item $M_{m+1}$ is a disjoint union of compact, contractible $n$-manifolds.
		\item $(M_i,F_i)$ is a tidy pair for each $i$.
		\item $M_{i+1}$ is  $M_i$ cut-open along $F_i$.
	\end{itemize}
\end{Definition}
More generally, if $(M,N)$ is a $G$-pair of manifolds, we can define a \emph{hierarchy ending in N} in the same way, with the one difference being that $M_{m+1} = N$.

\begin{Definition}
	$G$ \emph{admits an $n$-hierarchy} if there exists a contractible, $n$-dimensional $G$-manifold $M$ and a hierarchy for the action.
\end{Definition}

\begin{Lemma}\label{l:stab}
	Let $G$ act on $M$ with a hierarchy, and let $M_1^0$ be a component of $M_1$. 
	Then there is an induced hierarchy for the action of $\stab_G(M_1^0)$ on $M_1^0$, where $\stab_G(M_1^0)$ is the stabilizer of $M_1^0$.
\end{Lemma}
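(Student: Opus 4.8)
The plan is to take the ``tail'' of the given hierarchy and restrict it to the part lying over $M_1^0$. Write $H = \stab_G(M_1^0)$. Since $M_{i+1}$ is $M_i$ cut-open along $F_i$, it is, as a subspace, the complement in $M_i$ of an open collar of $F_i$, so there is a natural inclusion $M_{i+1} \hookrightarrow M_i$ under which each component of $M_{i+1}$ lies in a single component of $M_i$. Define inductively $M_i^0 \subseteq M_i$ for $i \ge 1$ by taking $M_1^0$ to be the given component and letting $M_{i+1}^0$ be the union of the components of $M_{i+1}$ contained in $M_i^0$; set $F_i' = F_i \cap M_i^0$. I claim that
\[
	(M_1^0, F_1'), (M_2^0, F_2'), \dots , (M_m^0, F_m'), (M_{m+1}^0, \emptyset)
\]
is a hierarchy for the action of $H$ on $M_1^0$.

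Since each $M_i^0$ is a union of components of $M_i$, the entire tidy-pair structure is inherited by restriction. Indeed $F_i' = F_i \cap M_i^0$ is precisely the union of those components of $F_i$ contained in $M_i^0$, so its components are contractible; the components of $M_i^0$ are contractible because those of $M_i$ are; and $F_i'$ is neat in $M_i^0$ because $\partial M_i^0 = \partial M_i \cap M_i^0$ yields $F_i' \cap \partial M_i^0 = \partial F_i \cap M_i^0 = \partial F_i'$, with the collar structure restricting as well. Hence each $(M_i^0, F_i')$ is a tidy pair. Because the $G$-invariant collar of $F_i$ restricts to an $H$-invariant collar of $F_i'$, cutting $M_i$ along $F_i$ restricts to cutting $M_i^0$ along $F_i'$, and the cut-open result is exactly $M_{i+1}^0$; finally $M_{m+1}^0$, being a subcollection of the components of $M_{m+1}$, is a disjoint union of compact contractible $n$-manifolds.

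It remains to verify the equivariance and that the action is geometric. Since $F_i$ is $G$-invariant and $H$ preserves $M_1^0$, the group $H$ permutes the descendant components at each stage, hence preserves every $M_i^0$ and every $F_i'$. For the geometric action, note that cutting is a $G$-equivariant operation, so $G$ acts properly and cocompactly on $M_1$. The action of $H = \stab_G(M_1^0)$ on the single orbit component $M_1^0$ is then proper, being the restriction of a proper action, and cocompact, since $H \backslash M_1^0$ is one of the finitely many pieces of the compact quotient $G \backslash M_1$. As $M_1^0$ is a contractible $n$-manifold by Lemma~\ref{cutting}, this exhibits the required induced hierarchy.

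The main point to handle carefully is the bookkeeping of descendant components together with the claim that cutting localizes to unions of components; once this is set up, the tidy-pair axioms and the geometric nature of the $H$-action follow by restriction, the only genuinely group-theoretic input being the cocompactness of the stabilizer action on a single orbit of components.
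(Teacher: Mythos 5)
Your proposal is correct and takes essentially the same route as the paper: the paper's induced hierarchy is $(M_1^0, F_1\cap M_1^0), (M_2\cap M_1^0, F_2\cap M_1^0),\dots$, which coincides with your inductively defined descendant components $M_i^0$ and $F_i'$. You simply spell out in more detail the verifications (tidiness by restriction, $\stab_G(M_1^0)$-invariance, properness and cocompactness) that the paper dispatches with ``the other conditions follow immediately.''
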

\begin{proof}
	We claim the following sequence is a hierarchy for $M_1^0$:
\[
(M_1^0,F_1 \cap M_1^0), (  M_2 \cap M_1^0,  F_2 \cap M_1^0), \dots  (M_{m+1} \cap M_1^0,\emptyset)
\]

We have that $M_1^0$ is a $\stab_G(M_1^0)$-manifold, and by Lemma \ref{cutting} is contractible. 
Since each $F_i$ is $G$-invariant, $F_i \cap M_1^0$ is $\stab_G(M_1^0)$-invariant, and the other conditions of our hierarchy follow immediately.
\end{proof}

\begin{Lemma}\label{l:tidy}
	Let $(M,\cE)$ be tidy, and let $N$ be $M$ cut-open along $E_{0}$. 
	Then $(N, \{E_{i}\cap N\}_{i=1}^{r})$ is also tidy.         
\end{Lemma}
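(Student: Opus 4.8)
The plan is to verify the four defining conditions of tidiness for the pair $(N, \{E_i \cap N\}_{i=1}^r)$ one at a time, reducing the two contractibility conditions to Lemma~\ref{cutting} and establishing the neatness and local hyperplane arrangement conditions by a local analysis in the charts supplied by tidiness of $(M,\cE)$.

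First I would record that $(M,E_0)$ is itself a tidy pair: the components of $M$ are contractible by hypothesis, the components of $E_0$ are contractible by the second bullet (taking the intersection of $E_i$'s to be $E_0$ alone), and the third and fourth bullets say precisely that $E_0$ is neatly embedded. Lemma~\ref{cutting} then gives that the components of $N$ are contractible, which is the first condition. For the second condition, fix $S \subseteq \{1, \dots, r\}$ and set $E_S = \bigcap_{i \in S} E_i$, so that $\bigcap_{i \in S}(E_i \cap N) = E_S \cap N$. The key point is that, once the collar of $E_0$ is chosen compatibly with the arrangement, $E_S \cap N$ is exactly $E_S$ cut open along $E_S \cap E_0 = E_{S \cup \{0\}}$; since $(E_S, E_{S\cup\{0\}})$ is again a tidy pair — its two contractibility requirements are instances of the second bullet for $(M,\cE)$, and neatness of $E_{S\cup\{0\}}$ in $E_S$ follows from the fourth bullet — the topological content of Lemma~\ref{cutting} shows its components are contractible.

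The neatness condition and the local hyperplane arrangement condition are local, so I would check them chart by chart. A point of $N$ lying in the interior of $M$ away from $E_0$, or on the old boundary $\partial M$, inherits its chart directly from $(M,\cE)$: one simply discards $E_0$ from the arrangement and keeps the linear models for $E_1, \dots, E_r$. The substantive case is a point on the new boundary $E_0 \times \partial I$. There, in a chart in which $E_0 = \{x_1 = 0\}$, cutting removes $\{|x_1| < \varepsilon\}$ and leaves a half-space $\{x_1 \ge \varepsilon\}$ whose new boundary is the image of $E_0$; the remaining hyperplanes $E_i$ restrict to linear hyperplanes of this half-space, transverse to the new boundary, which yields both neatness of $E_i \cap N$ and the required $\rr^n_+$ model.

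The main obstacle is the bookkeeping at the new boundary, in two respects. First, one must choose the cutting collar of $E_0$ adapted to the entire arrangement, so that it simultaneously restricts to a collar of $E_{S \cup \{0\}}$ inside every $E_S$; this compatibility is exactly what makes cutting commute with intersecting by $E_S$, and it is what the second condition above relies on. Such a collar exists because the fourth bullet provides a consistent linear model near every point, and these local models can be patched together $G$-invariantly. Second, where the new boundary $E_0 \times \partial I$ meets the old boundary $\partial M$ — that is, along $\partial E_0 \times \partial I$ — the naive cut produces a codimension-two corner; I would smooth this corner in the standard collar-adapted fashion so that $N$ is genuinely a manifold with boundary and the hyperplane arrangement charts survive the smoothing. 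Controlling these two points is the entire technical content of the lemma; the rest is a direct transcription of the definition.
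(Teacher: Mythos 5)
Your proof is correct and follows essentially the same route as the paper: both reduce the two contractibility conditions to Lemma~\ref{cutting} (treating $N$ as $M$ cut along $E_0$ and each intersection of the $E_i\cap N$ as the corresponding intersection of $E_i$'s cut along its trace on $E_0$), and both verify the local conditions chart by chart, with the only delicate point being the corner where the new boundary meets $\partial M$. Your treatment of the collar compatibility and of arbitrary intersections $E_S$ is in fact slightly more explicit than the paper's, but it is the same argument.
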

\begin{proof}

We first check the last two conditions of tidiness, as these help us prove the first two. 
Note that  $(M,E_{0})$ is a tidy pair.
 After cutting, the local picture is mostly preserved, we just have to check near $E_0$. 
 If $x \in (E_0 \times \partial I) - \partial M$, the new charts come from restricting the old chart to a halfspace bounded by a copy of $E_0$. 
 If $x \in \partial M \cap (E_0 \times \partial I)$ the new charts come from "straightening" $\mathbb{R}^{n-1} \cup E_0$ to $\mathbb{R}^{n-1}$ and preserving the linear structure of the other hyperplanes.
 
 Now, contractibility of the components of $N$ follows immediately from Lemma~\ref{cutting}. 
 By assumption, any intersection $\bigcap E_{i_{\alpha}}$ has contractible components, and our assumption on the local structure lets us choose a thin enough neighborhood such that the intersection between $E_0 \times I$ and $\bigcap E_{i_{\alpha}}$ deformation retracts to $\bigcap E_{i_{\alpha}} \cap E_0$. Since $N \cap E_i$ is precisely $E_i$ cut out by $E_0$, Lemma~\ref{cutting} again implies that $N \cap E_i$ has contractible components. 
 \end{proof}

\begin{Theorem}\label{tidy}
Let $M$ be a $G$-manifold, and $\cE= \{E_i\}_{i=0}^r$ a collection of  submanifolds such that $(M,\cE)$ is tidy. 
If the components of the complement $M - \cup_i E_i$ have compact closure in $M$, then the action of $G$ on $M$ admits a hierarchy.
\end{Theorem}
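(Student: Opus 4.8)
The plan is to build the hierarchy by cutting along the submanifolds $E_0, E_1, \dots, E_r$ one at a time, in the given order, verifying the required properties by iterating Lemma~\ref{l:tidy}. Concretely, I would set $M_0 = M$ and inductively define $F_i = E_i \cap M_i$, letting $M_{i+1}$ be $M_i$ cut-open along $F_i$. The resulting sequence $(M_0,F_0),(M_1,F_1),\dots,(M_r,F_r),(M_{r+1},\emptyset)$ will be the candidate hierarchy, so it suffices to check the four clauses of the definition.

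First I would establish, by induction on $i$, that $(M_i, \{E_j \cap M_i\}_{j=i}^{r})$ is tidy. The base case $i=0$ is the hypothesis. For the inductive step, $M_{i+1}$ is $M_i$ cut-open along the first member $E_i \cap M_i$ of the tidy collection, so Lemma~\ref{l:tidy} yields that $(M_{i+1}, \{E_j \cap M_{i+1}\}_{j=i+1}^{r})$ is tidy; the final application, cutting $(M_r, \{E_r \cap M_r\})$ along $E_r \cap M_r$, gives that $(M_{r+1},\emptyset)$ is tidy. In particular, for each $i$ the submanifold $F_i$ is neat with contractible components and all components of $M_i$ are contractible, so $(M_i,F_i)$ is a tidy pair, which is exactly what the cutting steps of a hierarchy require. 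Contractibility of the components of the terminal term $M_{r+1}$ is then immediate from tidiness of $(M_{r+1},\emptyset)$.

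It remains to see that $M_{r+1}$ is a disjoint union of \emph{compact} contractible manifolds, which I expect to be the only genuinely delicate point. Here I would use the complement hypothesis as follows. At step $i$ we remove the interior of a collar of $F_i = E_i \cap M_i$, an open set of $M_i$ whose interior contains $E_i \cap M_i$; since each subsequent cut only passes to a smaller closed subspace, $M_{r+1}$ is closed in $M$ and satisfies $M_{r+1} \cap E_i = \emptyset$ for every $i$, so $M_{r+1} \subseteq M \setminus \bigcup_i E_i$. Now let $\widetilde C$ be a component of $M_{r+1}$. Its interior is connected and disjoint from every $E_i$, hence lies in a single component $C$ of the complement $M - \bigcup_i E_i$, and therefore $\widetilde C \subseteq \overline{C}$. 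By hypothesis $\overline{C}$ is compact, while $\widetilde C$, being a component of the closed subspace $M_{r+1}$, is closed in $M$; a closed subset of a compact set is compact, so $\widetilde C$ is compact.

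Assembling these observations, the displayed sequence satisfies every clause of the definition of a hierarchy, so the action of $G$ on $M$ admits one. The main obstacle is the compactness of the terminal pieces, and the key idea is that the fully cut-open manifold embeds into the complement of the arrangement, so each of its components is trapped inside the compact closure of a single complementary region. All of the tidiness bookkeeping at the intermediate stages is supplied directly by Lemma~\ref{l:tidy}, so beyond this compactness argument the proof is a routine induction on the size of $\cE$.
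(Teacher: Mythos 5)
Your proposal is correct and follows essentially the same route as the paper: set $M_0=M$, cut along the images of the $E_i$ one at a time, and invoke Lemma~\ref{l:tidy} inductively to keep the collection tidy, so that each $(M_i,F_i)$ is a tidy pair and the terminal pieces are the closures of the complementary regions. Your explicit argument for compactness of the components of $M_{r+1}$ fills in a detail the paper leaves implicit, but it is not a different approach.
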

\begin{proof}

The proof is to apply Lemma \ref{l:tidy} repeatedly, as this implies that if we cut along each $E_i$, we get a hierarchy ending in $M - \cup_i E_i$. 
To be precise, let $F_j = E_j$ cut-along by $E_0, E_1, \dots E_{j-1}$ and let $M_0 = M$ and $M_{j+1} = M_j$ cut along by $F_j$. Since each $E_i$ is $G$-invariant, $(M_j,F_j)$ is a tidy pair for all $j$.
\end{proof}

\section{Coxeter groups}\label{s:cox}

Recall that a Coxeter group $W$ has generators $s_i$ with relations $s_i^2 = 1$ and $(s_is_j)^{m_{ij}}$ for $m_{ij} \in \mathbb{N} \cup \infty$.
 In other words, $W$ is generated by reflections and each pair of reflections generates a dihedral subgroup (possibly $D_\infty)$. 
The \emph{nerve} of a Coxeter group is a simplicial complex  with vertices corresponding to generators $s_i$, and $s_{i_1}, \dots , s_{i_n}$ a simplex iff the subgroup generated by $s_{i_1}, \dots , s_{i_n}$ is finite. 
A Coxeter group is \emph{right-angled} is $m_{ij} = 2$ or $\infty$ for all $i,j$.

\begin{Definition}
	A \emph{mirror structure} on a space $X$ is an index set $S$ and a collection of subspaces $\{X_s\}_{s \in S}$.
	For each $x \in X$, let 
	\[
		S(x) := \{s \in S \hspace{1mm}| \hspace{1mm} x \in X_s\}.
	\]
\end{Definition}

An example to keep in mind is a convex polytope in $\mathbb{E}^n$ or $\mathbb{H}^n$ with mirrors the codimension-one faces.
We will assume that our index set $S$ is finite.
\begin{Definition}
	Let $X$ have a mirror structure, and let $W$ be a Coxeter group with generators $s \in S$.
	Let $W_T$ denote the subgroup generated by $s \in T \subset S$.
	Let $\sim$ denote the following equivalence relation on $W \times X: (w_1,x) \sim (w_2,y)$ if and only if $x = y$ and $w_1w_2^{-1} \in W_{S(x)}$.
	The \emph{basic construction} is the space 
	\[
		\mathcal{U}(W,X) := W \times X/ \sim.
	\]
\end{Definition}

Therefore, $\mathcal{U}(W,X)$ is constructed by gluing together copies of $X$ along its mirrors, with the exact gluing dictated by the Coxeter group.
A standard example is where $X$ is a right-angled pentagon in $\mathbb{H}^2$ with mirrors the edges of $X$, and $W$ is the right-angled Coxeter group generated by reflections in these edges.
Then $\mathcal{U}(W,X) \cong \mathbb{H}^2$.

Let $W$ be a Coxeter group with nerve $L$.
Again, $L$ is the simplicial complex with vertex set corresponding to $S$ and simplices corresponding to subsets of $S$ that generate finite subgroups of $W$.
Let $K$ be the cone on the barycentric subdivision of $L$.
$K$ admits a natural mirror structure with $K_s$ the closed star of the vertex corresponding to $s$ in the barycentric subdivision of $L$.
The \emph{Davis complex} $\Sigma(W,S)$ is defined to be the simplicial complex $\mathcal{U}(W,K)$.
\begin{Lemma}
	$\Sigma(W,S)$ has the following properties \cite{d08}.
	\begin{itemize}
		\item $W$ acts properly and cocompactly on $\Sigma(W,S)$ with fundamental domain $K$.
		\item $\Sigma$ admits a cellulation such that the link of every vertex can be identified with $L$.
		Therefore, if $L$ is a triangulation of $S^{n-1}$, then $\Sigma(W,S)$ is an $n$-manifold.
		\item $\Sigma(W,S)$ admits a piecewise Euclidean metric that is $CAT(0)$.
	\end{itemize}
\end{Lemma}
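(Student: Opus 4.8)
The plan is to verify the three listed properties in turn; all are standard facts about the Davis complex recorded in \cite{d08}, so I will indicate how each is recovered rather than belabor the details. For the proper cocompact action, I would let $W$ act on $\mathcal{U}(W,K) = W \times K/\!\sim$ through left multiplication on the $W$-factor. This descends to the quotient because the relation $(w_1,x)\sim(w_2,y)$ depends only on $x=y$ and on the coset of $w_1,w_2$ modulo $W_{S(x)}$, which left translation respects. The image of $\{1\}\times K$ is then a strict fundamental domain: every class has a representative of the form $(1,x)$, and two such are identified only when $x=y$ and $1\in W_{S(x)}$. The stabilizer of the class of $(w,x)$ is a conjugate of the special subgroup $W_{S(x)}$, and since $S(x)$ spans a simplex of the nerve $L$ it generates a finite subgroup of $W$; hence every stabilizer is finite and the action is proper. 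Cocompactness is immediate, as $K$ is the cone on the finite complex $L$, hence compact, and its $W$-translates cover $\Sigma(W,S)$.

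For the cellulation, I would equip $\Sigma$ with its natural cell structure, whose vertices are the cone points of the chambers $wK$, a copy of $W$. The cells incident to a fixed vertex are indexed by the spherical subsets $T\subseteq S$, those for which $W_T$ is finite, with the face relation given by inclusion of subsets. Since by definition the simplices of $L$ are precisely the nonempty spherical subsets, this poset is isomorphic to the poset of simplices of $L$, which identifies the link of every vertex with $L$. Granting this, if $L$ triangulates $S^{n-1}$ then a neighborhood of each vertex is a cone on $S^{n-1}$, hence an open $n$-ball; a parallel verification that the link of every higher-dimensional cell is again a sphere then shows that $\Sigma$ is a topological $n$-manifold.

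The $CAT(0)$ metric is the step I expect to be the main obstacle. Here I would metrize $\Sigma$ by declaring each Davis cell to be the Euclidean polytope spanned by a $W_T$-orbit, the Moussong metric, and then combine Gromov's link condition with the Cartan--Hadamard theorem: a complete, simply connected, piecewise Euclidean complex is $CAT(0)$ exactly when the link of every vertex is $CAT(1)$. Completeness is automatic from local finiteness and cocompactness, and simple connectivity follows from the basic construction, since $K$ is contractible and each mirror $K_s$ is a nonempty contractible closed star. The genuine difficulty is the link condition: the vertex links are the piecewise spherical complexes assembled from $L$, and verifying that they are $CAT(1)$ is precisely the content of Moussong's lemma. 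I expect this curvature estimate to absorb essentially all of the work, the remaining assertions being formal consequences of the basic construction.
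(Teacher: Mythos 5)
The paper offers no proof of this lemma; it is stated as a summary of standard facts with a citation to \cite{d08}. Your outline correctly reproduces the arguments from that source --- properness via stabilizers being conjugates of the finite special subgroups $W_{S(x)}$, cocompactness from the compact fundamental chamber $K$, the Coxeter-cell cellulation with vertex links identified with $L$, and Moussong's theorem (via the link condition and Moussong's lemma) for the $CAT(0)$ metric --- so it matches the intended justification.
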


We assume from now on that $W$ is a Coxeter group with nerve a triangulation of $S^{n-1}$.
If $w \in W$ acts as a reflection on $\Sigma(W,S)$, we call the fixed point set a wall, and denote it $\Sigma^w$.
\begin{Lemma}\label{walls}
	Walls in $\Sigma(W,S)$ have the following properties.
	\begin{itemize}
		\item The stabilizer of each wall acts geometrically on the wall.
		\item Each wall and each half-space is a geodesically convex subset of $\Sigma(W,S)$.
		\item The collection of walls separates $\Sigma(W,S)$ into disjoint copies of the fundamental domain $K$.
		\item The stabilizer of each point in $\Sigma(W,S)$ is a finite Coxeter group, and the walls containing that point can be locally identified with the fixed hyperplanes of the standard action of this Coxeter group on $\mathbb{R}^n$.
		\end{itemize}
\end{Lemma}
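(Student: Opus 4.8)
The plan is to treat the four properties in an order that passes from the combinatorial structure of the basic construction to the metric geometry of the $CAT(0)$ complex, since each statement is essentially part of the standard wall calculus for Davis complexes; the work is to assemble these and supply the $CAT(0)$ arguments. Throughout I write $r$ for a reflection, i.e.\ a conjugate $r = wsw^{-1}$ of a generator $s \in S$, so that $\Sigma^r = w\Sigma^s$ and it suffices to understand the walls $\Sigma^s$ of the generators.

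First I would dispose of the third and fourth bullets, which are purely about $\mathcal{U}(W,K)$. The chambers $wK$, $w \in W$, tile $\Sigma$, and two chambers $wK$, $wsK$ meet exactly along the mirror $wK_s$; the union of all $W$-translates of all mirrors is precisely the union of walls, so deleting the walls leaves the disjoint open chambers $\mathrm{int}(wK)$, whose closures are the asserted copies of $K$. For the fourth bullet, recall that the stabilizer of a point $[w,x]$ is the conjugate $wW_{S(x)}w^{-1}$ of the special subgroup $W_{S(x)}$. Because $K$ is the cone on the barycentric subdivision of $L$ and the mirrors are closed stars of vertices, the set $S(x)$ is always a simplex of $L$, hence a spherical subset, so $W_{S(x)}$ is a finite Coxeter group. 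A finite Coxeter group is a finite reflection group on $\mathbb{R}^n$, and near the fixed point the Coxeter cell structure on $\Sigma$ (whose vertex links are copies of $L$) is modeled on the cone over the Coxeter complex of $W_{S(x)}$; under such a chart the walls through the point go to the reflection hyperplanes, as claimed.

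Next I would establish the second bullet using that $\Sigma$ is complete $CAT(0)$. A wall $\Sigma^r$ is the fixed-point set of the isometric involution $r$, and the fixed-point set of an isometry of a complete $CAT(0)$ space is a closed, totally geodesic, hence convex, subset, giving convexity of every wall. For the half-spaces I would first note that $\Sigma^r$ is connected and separates $\Sigma$ into exactly two open components $H^+$, $H^-$ interchanged by $r$, using the two-sidedness of the wall coming from the local picture of the previous step. Convexity of $H^+$ then follows by folding: if the geodesic between two points of $H^+$ entered $H^-$, it would cross the convex wall at a first and last point $a$, $b$, and reflecting the excursion into $H^-$ back across $\Sigma^r$ by the isometry $r$ would produce a path from $p$ to $q$ of the same length lying in $\overline{H^+}$; by uniqueness of $CAT(0)$ geodesics this path must coincide with the original geodesic, contradicting that the latter enters $H^-$.

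Finally, for the first bullet, the pointwise stabilizer of $\Sigma^s$ is $\langle s\rangle$, and the setwise stabilizer $\stab_W(\Sigma^s)$ acts properly on the wall as a subgroup of $W$ acting properly on $\Sigma$. For cocompactness I would use that $\Sigma^s$ is tiled by the compact mirrors of the chambers meeting it, permuted by $\stab_W(\Sigma^s)$ with finitely many orbits; equivalently, $\Sigma^s$ is itself the Davis complex of a \emph{wall subgroup} with compact fundamental domain. I expect the main obstacle to be exactly this last point --- verifying that the setwise stabilizer acts \emph{cocompactly}, with fundamental domain a single mirror, rather than merely properly --- together with the companion input in the second bullet that each wall is connected and genuinely two-sided. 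Both belong to Davis's wall theory, so I would either cite~\cite{d08} for the separation and cocompactness statements or reprove them from the gallery and reflection-length combinatorics of $(W,S)$.
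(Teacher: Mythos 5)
Your proposal is correct, and it matches the paper's treatment: the paper offers no proof of this lemma at all, presenting these as standard facts about walls in the Davis complex with the reference to~\cite{d08} carried over from the preceding lemma. Your reconstruction of the standard arguments (fixed-point sets of isometries of a complete $CAT(0)$ space are convex, the folding argument for half-spaces, stabilizers $wW_{S(x)}w^{-1}$ with $S(x)$ a spherical simplex) is sound, and you correctly isolate the two points --- separation into exactly two half-spaces and cocompactness of the wall stabilizer --- that genuinely require the wall calculus from~\cite{d08}.
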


Though each wall of $\Sigma$ is a contractible submanifold, a $W$-orbit of a wall has in general quite complicated topology.
Even in the simple case where $W$ is generated by reflections in a equilateral triangle in $\mathbb{R}^2$ the $W$-orbit of a wall is not contractible, as $W$-translates of a wall can intersect nontrivially. 
However, passing to suitable subgroup fixes this problem.

\begin{Theorem}\label{hier}
	 $W$ has a finite index torsion-free normal subgroup $\Gamma$, and the action of $\Gamma$ on $\Sigma(W,S)$ admits a hierarchy.
\end{Theorem}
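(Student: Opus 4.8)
The plan is to produce the hierarchy by applying Theorem~\ref{tidy} to the arrangement of \emph{all} walls of $\Sigma := \Sigma(W,S)$, organized into finitely many orbits of a carefully chosen subgroup. Since $W$ is finitely generated and linear (via the Tits representation), it is virtually torsion-free; starting from a torsion-free finite-index subgroup and intersecting its finitely many conjugates, I obtain a torsion-free \emph{normal} finite-index subgroup $\Gamma \trianglelefteq W$. As $\Gamma$ still acts cocompactly on the $CAT(0)$ manifold $\Sigma$, only finitely many $\Gamma$-orbits of walls meet a compact fundamental domain, so the walls fall into finitely many orbits. For each orbit I set $E_i := \Gamma \cdot \Sigma^{w_i}$, the union of the walls in that orbit, and put $\cE = \{E_i\}_{i=0}^r$.

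The main obstacle — and the sole reason for insisting that $\Gamma$ be torsion-free \emph{and} normal — is to verify that each $E_i$ is a genuinely embedded codimension-one submanifold, equivalently that distinct walls lying in a single $\Gamma$-orbit are disjoint. Suppose $\gamma\Sigma^{w} \neq \Sigma^{w}$ for some $\gamma \in \Gamma$, yet the two walls meet at a point $x$. The reflection across $\gamma\Sigma^{w}$ is $\gamma w \gamma^{-1}$, so $w$ and $\gamma w \gamma^{-1}$ are distinct reflections in the finite Coxeter group $\stab_W(x)$ (last item of Lemma~\ref{walls}) whose fixed hyperplanes intersect; hence their product $g = w\,\gamma w \gamma^{-1}$ is a nontrivial rotation of finite order $\ge 2$. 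But normality gives $w\gamma w^{-1} \in \Gamma$, whence $g = (w\gamma w^{-1})\gamma^{-1} \in \Gamma$, contradicting torsion-freeness. Thus walls in each orbit are pairwise disjoint, and each $E_i$ is an embedded $\Gamma$-invariant codimension-one submanifold whose components are single walls.

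With embeddedness secured, the remaining conditions of tidiness come essentially for free from the geometry in Lemma~\ref{walls}. Each wall is contractible and $\Sigma$ itself is contractible; any intersection $E_{i_1}\cap\cdots\cap E_{i_k}$ is, componentwise, an intersection of geodesically convex walls, hence convex and therefore contractible when nonempty. The boundary condition is vacuous since $\Sigma$ is a closed manifold (so $\partial E_i = \emptyset$ as well), and the requirement that $\cE$ locally look like a real hyperplane arrangement is precisely the local model in the last item of Lemma~\ref{walls}, where the walls through a point are identified with the reflecting hyperplanes of a finite Coxeter group acting on $\mathbb{R}^n$. Hence $(\Sigma,\cE)$ is tidy.

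Finally, because $\cE$ contains \emph{all} walls, the components of $\Sigma - \bigcup_i E_i$ are exactly the open copies of the compact fundamental domain $K$ (third item of Lemma~\ref{walls}), and so have compact closure. Theorem~\ref{tidy} then yields a hierarchy for the action of $\Gamma$ on $\Sigma$, completing the proof. I expect the disjointness-within-an-orbit step of the second paragraph to be the only delicate point; everything else is a direct translation of the listed wall properties into the four bullets of the definition of tidy.
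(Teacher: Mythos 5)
Your proposal is correct and follows essentially the same route as the paper: take a torsion-free normal finite-index $\Gamma$, use the $\Gamma$-orbits of walls as the tidy collection $\cE$, and invoke Theorem~\ref{tidy} with the copies of $K$ as the compact complementary pieces. The only difference is that where the paper cites a lemma of Millson--Jaffee for the trivial intersection property of walls under $\Gamma$, you prove it directly (via the finite-order element $w\,\gamma w\gamma^{-1}\in\Gamma$), which is exactly the standard argument and makes your write-up self-contained.
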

\begin{proof}
	The existence of such a subgroup $\Gamma$ is well-known.
	The cutting submanifolds that we choose will be $\Gamma$-orbits of walls in $\Sigma(W,S)$.
	
	A lemma of Millson and Jaffee in~\cite{m76} shows that any torsion-free normal subgroup of $W$ has the trivial intersection property: for all $\gamma \in \Gamma$, either $\gamma\Sigma^s = \Sigma^s$ or $\gamma\Sigma^s \cap \Sigma^s = \emptyset$.
	Therefore, each $\Gamma$-orbit is a disjoint union of walls and has contractible components.
	
	Once we have removed all the walls, we are left with disjoint copies of the fundamental domain $K$, and since $\Gamma$ is finite index in $W$, there are only finitely many orbits of walls to remove, so by Lemma \ref{walls}, this is a tidy collection.
	Therefore, we are done by Theorem \ref{tidy}.
\end{proof}

\begin{Remark}
If $W$ is a Coxeter group with nerve a triangulation of $D^{n-1}$, then $\Sigma(W,S)$ is an $n$-manifold with boundary, and these groups also virtually admit hierarchies. 
\end{Remark}

\section{$L^2$-homology}\label{s:l2} 

Let $M$ be a $G$-space, and let $C_{*}(M)$ denote the usual cellular chains of $M$, which we regard as left $\mathbb{Z}G$-modules.
The \emph{square-summable chains of $M$} are the tensor product 
\[
	C_{*}^{(2)}(M) = L^{2}(G) \otimes_{\mathbb{Z}G} C_{*}(M)
\]
where $L^{2}(G)$ is the Hilbert space of real-valued square-summable functions on $G$.

The usual boundary homomorphism $\partial: C_{*}(M) \rightarrow C_{{*}-1}(M)$ extends to a boundary operator $\partial: C_{*}^{(2)}(M) \rightarrow C_{{*}-1}^{(2)}(M)$ whose adjoint is the coboundary operator $\delta: C_{*}^{(2)}(M) \rightarrow C_{{*}+1}^{(2)}(M)$.

The $L^{2}$-(co)homology groups can be defined as the kernel of the Laplacian operator: 
\[
	L^2H_{*}(M,G) \cong L^2H^{*}(M,G) \cong \ker (\partial \delta + \delta \partial): C_{*}^{(2)}(M) \rightarrow C_{*}^{(2)}(M).
\]
We shall shorten this to $L^2H_{*}(M)$ if the group action is obvious.
We record as a lemma some of the basic algebraic properties of $L^2$-homology that we will need.
As images of maps are rarely closed subspaces of Hilbert spaces, we must define a \emph{weakly exact sequence} where kernels are equal to the closures of images.
\begin{Lemma}\label{props}
	Let (M,N) be a pair of $G$-spaces.
	\begin{itemize}
		
		\item(Functoriality) If $(M_1,N_1)$ and $(M_2,N_2)$ are pairs of $G$-spaces and $f:(M_1,N_1) \rightarrow (M_2,N_2)$ is a $G$-equivariant map, then there is an induced map $f_{*}: L^2H_k(M_1,N_1) \rightarrow L^2H_k(M_2,N_2)$.
		If $f$ is a $G$-equivariant homotopy equivalence, then $f_{*}$ is an isomorphism.
		
		\item (Exact sequence of a pair) The sequence 
		\[
			\dots \rightarrow L^2H_i(N) \rightarrow L^2H_i(M) \rightarrow L^2H_i(M,N) \rightarrow \dots
		\]
		is weakly exact.
		
		\item (Induction principle) The $L^{2}$-homology of $M$ is induced from the $L^{2}$-homology of its components: 
		\[
			L^2H_i(M;G) = \bigoplus_{[M^{0}]\in\pi_{0}(M)/G} L^2H_i(M^0, \stab_G M^0) \nearrow G,
		\]
		where the sum is over representatives of the orbits of the components of $M$.
				
		\item (Mayer--Vietoris sequences) Suppose $M = M_1 \cup_{M_0} M_2$ and $(M,M_i)$ is a pair of $G$-spaces for $i = 1,2$.
		Then $(M,M_0)$ is a pair of $G$-spaces and the sequence 
		\[
			\dots \rightarrow L^2H_i(M_0) \rightarrow L^2H_i(M_1) \oplus L^2H_i(M_2) \rightarrow L^2H_i(M) \rightarrow \dots
		\]
		is weakly exact.
		
		\item (Poincar\'{e} Duality) If $M$ is a manifold then $L^2H^i(M) \cong L^2H_{n-i}(M, \partial M)$ and $L^2H_i(M) \cong L^2H^{n-i}(M, \partial M)$
	\end{itemize}
\end{Lemma}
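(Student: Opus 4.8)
The plan is to treat all five statements as formal consequences of the chain-level description of $L^2$-homology, so that the ``proof'' mainly amounts to translating each item into an algebraic fact about the functor $L^2(G)\otimes_{\mathbb{Z}G}(-)$ applied to free $\mathbb{Z}G$-chain complexes. Recall that for a $G$-CW complex $M$ the cellular chains $C_*(M)$ form a complex of free $\mathbb{Z}G$-modules, and $C_*^{(2)}(M)=L^2(G)\otimes_{\mathbb{Z}G}C_*(M)$ is a complex of Hilbert modules over the group von Neumann algebra $\mathcal{N}(G)$. Reduced $L^2$-homology is $\ker\partial/\overline{\image\partial}$, and the Hodge decomposition identifies this with the harmonic chains $\ker(\partial\delta+\delta\partial)$, the description quoted in the text. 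Everything below is standard (the natural references are the books of L\"uck and the work of Cheeger--Gromov), so I would keep the argument to a translation.

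For functoriality, a $G$-equivariant map $f$ is $G$-homotopic to a cellular one and so induces a chain map $f_*\colon C_*(M_1)\to C_*(M_2)$ of $\mathbb{Z}G$-modules; tensoring with $L^2(G)$ gives a bounded $\mathcal{N}(G)$-equivariant chain map, hence a map on reduced homology. A $G$-equivariant homotopy yields a $\mathbb{Z}G$-chain homotopy, which tensors to a bounded chain homotopy and forces $f_*$ to be an isomorphism on harmonic representatives. The exact sequence of a pair and the Mayer--Vietoris sequence both come from short exact sequences of $\mathbb{Z}G$-chain complexes: $0\to C_*(N)\to C_*(M)\to C_*(M,N)\to 0$ in the first case, and the excision sequence for $M=M_1\cup_{M_0}M_2$ in the second. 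Since these are sequences of free $\mathbb{Z}G$-modules they split as modules, so applying $L^2(G)\otimes_{\mathbb{Z}G}(-)$ preserves exactness and produces short exact sequences of Hilbert chain complexes; the associated long sequence in reduced $L^2$-homology is then weakly exact.

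For the induction principle I would decompose $C_*(M)$ according to the $G$-orbits of components of $M$: fixing an orbit representative $M^0$ with stabilizer $\stab_G M^0$, the chains on that orbit form the induced module $\mathbb{Z}G\otimes_{\mathbb{Z}\stab_G M^0}C_*(M^0)$, and compatibility of $L^2(G)\otimes_{\mathbb{Z}G}(-)$ with induction identifies its reduced homology with $L^2H_i(M^0,\stab_G M^0)\nearrow G$; summing over orbits gives the displayed formula. For Poincar\'e duality, cap product with the locally finite fundamental class of the oriented $n$-manifold $M$ gives a $G$-equivariant chain-level duality between $C_*^{(2)}(M)$ and the cochains $C^{*,(2)}(M,\partial M)$, and passing to harmonic representatives yields the two stated isomorphisms.

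The one genuinely delicate point is the weak exactness in the second and fourth items: since images of bounded operators between Hilbert spaces need not be closed, ``exact'' must be replaced throughout by ``kernel equals closure of image.'' The main thing to verify is therefore that a short exact sequence of Hilbert $\mathcal{N}(G)$-chain complexes induces a long weakly exact sequence on reduced homology; the connecting maps and the weak exactness at each spot come from the general homological algebra of Hilbert complexes over a finite von Neumann algebra, and this is the step where I would lean on the cited literature rather than reprove the formalism.
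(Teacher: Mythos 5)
The paper gives no proof of this lemma at all: it is ``recorded'' as a list of standard facts about reduced $L^2$-homology, with the burden carried by the literature (the relevant background is in Davis's book and the sources it cites). Your sketch is the standard argument one would extract from those references, and it is essentially correct, so there is no conflict with the paper --- you are simply supplying the routine verification the authors chose to omit. Two small points are worth adjusting to match the actual setting of the paper. First, the $G$-actions here are geometric but explicitly \emph{not} assumed free, so $C_*(M)$ is not a complex of free $\mathbb{Z}G$-modules; each chain group is a direct sum of permutation modules $\mathbb{Z}[G/G_\sigma]$ with $G_\sigma$ finite, and $C^{(2)}_*(M)$ is a sum of the corresponding Hilbert modules $L^2(G/G_\sigma)$. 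Your splitting argument for the pair and Mayer--Vietoris sequences still goes through, because the quotient $C_*(M,N)$ is again a direct sum of such permutation modules (indexed by the cells not in $N$), so the short exact sequences split $\mathbb{Z}G$-equivariantly --- but the justification ``free, hence split'' should be replaced by this observation. Second, for Poincar\'e duality you should either assume orientability and an orientation-preserving action or note the standard reduction to the orientation double cover; the paper is silent on this, but your cap-product formulation implicitly assumes an equivariant fundamental class. With those two repairs your outline is a faithful account of what the paper is taking for granted, and the genuinely delicate point you isolate --- that short exact sequences of Hilbert chain complexes only yield \emph{weakly} exact long sequences on reduced homology --- is exactly the reason the paper introduces the notion of weak exactness just before stating the lemma.
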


We record as a lemma the specific version of the Mayer--Vietoris sequence that we use.
\begin{Lemma}
If $(M,F)$ is a tidy pair and $N$ is $M$ cut-open along $F$, there is a Mayer--Vietoris sequence 
\begin{equation}\label{e:L2MV}
	\rightarrow L^2H_i(F \times \partial I) \rightarrow L^2H_i(F \times  I) \oplus L^2H_i(N) \rightarrow L^2H_i(M) \rightarrow L^2H_{i-1}(F \times \partial I) \rightarrow
\end{equation}
\end{Lemma}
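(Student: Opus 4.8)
The plan is to deduce \eqref{e:L2MV} directly from the general Mayer--Vietoris property recorded in Lemma~\ref{props}. Cutting $M$ along $F$ yields the decomposition
\[
	M = N \cup_{F \times \partial I} (F \times I),
\]
so I would apply that property with $M_1 = F \times I$, $M_2 = N$, and common intersection $M_0 = M_1 \cap M_2 = F \times \partial I$. This produces a weakly exact sequence
\[
	\dots \rightarrow L^2H_i(F \times \partial I) \rightarrow L^2H_i(F \times I) \oplus L^2H_i(N) \rightarrow L^2H_i(M) \rightarrow \dots,
\]
and the connecting homomorphism continues the sequence into $L^2H_{i-1}(F \times \partial I)$, which is exactly the statement of \eqref{e:L2MV}.

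The only hypothesis of Lemma~\ref{props} that requires checking is that the three pieces are $G$-invariant subspaces of $M$, so that $(M, F \times I)$ and $(M, N)$ are genuinely pairs of $G$-spaces. This is where the tidiness of $(M,F)$ enters: because the components of $F$ are contractible, $F$ admits a $G$-invariant collar $F \times I$, and hence the collar, its boundary $F \times \partial I$, and the cut-open complement $N = M - \operatorname{int}(F \times I)$ are all $G$-invariant by construction. With this in hand the general Mayer--Vietoris property applies verbatim, and there is nothing further to verify about weak exactness, which is already part of the conclusion of Lemma~\ref{props}.

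At the chain level I would arrange $F \times I$ to be a $G$-subcomplex of a $G$-CW structure on $M$; then the square-summable cellular chain complexes $C_*^{(2)}(N)$ and $C_*^{(2)}(F \times I)$ sit inside $C_*^{(2)}(M)$ with intersection $C_*^{(2)}(F \times \partial I)$, giving the short exact sequence of chain complexes whose associated long sequence is weakly exact. I do not expect a genuine obstacle here: the substantive geometric input, namely the $G$-invariant collar, was already secured in the discussion preceding Lemma~\ref{cutting}, and the remainder is the routine translation of a $G$-invariant pushout decomposition into the corresponding weakly exact $L^2$-homology sequence.
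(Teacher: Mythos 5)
Your proposal is correct and matches the paper's intent exactly: the paper records this lemma without proof as the specialization of the Mayer--Vietoris property of Lemma~\ref{props} to the decomposition $M = N \cup_{F\times\partial I} (F\times I)$ coming from the $G$-invariant collar. Your additional remarks on $G$-invariance of the pieces and the chain-level setup are sound and simply make explicit what the paper leaves implicit.
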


Note that $L^2H_i(F \times \partial I) \cong L^2H_i(F) \oplus L^2H_i(F)$ and $L^2H_i(F \times I) \cong L^2H_i(F)$.
In particular, if any of these terms are zero, the other terms are as well.

\begin{Definition}
	For a discrete group $G, L^2H_i(G)$ is the $L^2$-homology of a contractible $G$-space.
	By the functoriality property, this is well-defined.
\end{Definition}

\section{Vanishing conjectures and results}\label{s:vanish} 

\begin{Conjecture1}
If $G$ acts geometrically on a contractible $n$-manifold without boundary, then $L^2H_i(G) = 0$ for $i \ne n/2$.
\end{Conjecture1}

The conjecture holds for trivial reasons in dimensions $\le 2$. In dimension $3$, Lott and L\"{u}ck~\cite{ll95} proved the conjecture for all fundamental groups of manifolds that satisfy the Geometrization Conjecture, therefore by Perelman~\cite{p02,p03,p03a} it holds for all groups acting geometrically on contractible $3$-manifolds. 
We record this as a theorem.      

\begin{Theorem}\label{t:lowsinger}
The Singer conjecture is true in dimensions $n \le 3$.
\end{Theorem}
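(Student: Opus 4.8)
The plan is to dispatch the cases $n\le 2$ by hand and to reduce the case $n=3$ to the theorem of Lott--Lück~\cite{ll95} via Perelman's resolution of the Geometrization Conjecture~\cite{p02,p03,p03a}. Throughout I write $b_i^{(2)}(G)=\dim_{\mathcal{N}(G)}L^2H_i(G)$, and use that $L^2H_i(G)=0$ if and only if $b_i^{(2)}(G)=0$, since the von Neumann dimension is faithful. The case $n=0$ is vacuous (a point has no chains in nonzero degree), so I assume $n\ge 1$; then a contractible $n$-manifold without boundary is noncompact, and cocompactness of the action forces $G$ to be infinite.

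For $n\le 2$ I use only elementary vanishing. Since $G$ is infinite, $L^2H_0(G)=0$ (indeed $b_0^{(2)}(G)=1/|G|$, which vanishes for infinite $G$). As $C_*^{(2)}(M)$ is concentrated in degrees $\le n$, we get $L^2H_i(G)=0$ for $i>n$ for free. Finally Poincaré duality (Lemma~\ref{props}), combined with the isomorphism $L^2H^*\cong L^2H_*$ and $\partial M=\emptyset$, yields
\[
  L^2H_n(G)\cong L^2H^0(M)\cong L^2H_0(G)=0 .
\]
When $n=1$ this is everything, because $n/2=1/2$ is not an integer and every degree has been killed; when $n=2$ the only surviving degree is the middle degree $i=1=n/2$, where the conjecture asserts nothing. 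Thus $n\le 2$ holds for trivial reasons.

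For $n=3$ the exponent $n/2=3/2$ is again non-integral, so the goal is to show $L^2H_i(G)=0$ in every degree. The key structural fact is that $L^2$-Betti numbers are multiplicative under finite-index inclusions: if $\Gamma\le G$ has index $[G:\Gamma]<\infty$ then $b_i^{(2)}(\Gamma)=[G:\Gamma]\,b_i^{(2)}(G)$, so it is enough to find a finite-index subgroup $\Gamma$ with all $b_i^{(2)}(\Gamma)=0$. Take $\Gamma$ torsion-free of finite index. Properness makes all stabilizers finite, and a finite subgroup of a torsion-free group is trivial, so $\Gamma$ acts \emph{freely} on $M$; hence $M/\Gamma$ is a closed aspherical $3$-manifold with universal cover $M$ and $\pi_1(M/\Gamma)\cong\Gamma$. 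By Perelman $M/\Gamma$ is geometrizable, so Lott--Lück's computation applies and gives $b_i^{(2)}(\Gamma)=0$ for all $i$ (for an aspherical odd-dimensional closed manifold all $L^2$-Betti numbers vanish). Multiplicativity then forces $b_i^{(2)}(G)=0$, i.e. $L^2H_i(G)=0$ for all $i$, as required.

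The one genuine gap in this outline — and the step I expect to be the main obstacle — is the passage from an arbitrary geometric action to a \emph{free} action of a finite-index subgroup, which I invoked by choosing a torsion-free finite-index $\Gamma$. This presupposes that $G$ is virtually torsion-free, which is not automatic for groups acting geometrically on contractible manifolds. The cleanest way around it is to work directly with the closed aspherical $3$-orbifold $M/G$ and to replace Perelman's theorem and the Lott--Lück computation by their orbifold counterparts; alternatively, in the situations where this result is actually applied — such as the torsion-free $\Gamma$ produced in Theorem~\ref{hier}, or free actions coming from closed manifolds — the action is already free and no reduction is needed.
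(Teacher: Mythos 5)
Your proposal takes essentially the same route as the paper: the cases $n\le 2$ are dispatched as trivial, and $n=3$ is reduced to the Lott--L\"uck computation~\cite{ll95} via Perelman's resolution of geometrization~\cite{p02,p03,p03a}. The ``gap'' you flag (passing from a geometric action to a free action of a torsion-free finite-index subgroup, or else invoking orbifold versions of geometrization and of Lott--L\"uck) is also left implicit in the paper, which simply asserts the $n=3$ case follows from those two references; your write-up is, if anything, more careful.
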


\begin{Definition}
	The \emph{action dimension} of a group $G$, $\actdim(G)$ is the least dimension of a contractible manifold which admits a proper $G$-action.
\end{Definition}

Action dimension was introduced and studied by Bestvina, Kapovich, and Kleiner in~\cite{bkk02}.
One consequence of their work is that an $n$-fold product of non-abelian free groups does not act properly discontinuously on a contractible $(2n-1)$-manifold.
Since non-abelian free groups have $L^2H_1(F_n) \ne 0$, it follows from a K{\"u}nneth formula for $L^2$-homology that the $n$-fold products have non-trivial $L^2H_n$.
Therefore, as noted in \cite{do01}, their result is implied by the following conjecture.
\begin{Conjecture2}\label{actdim}
         $L^2H_i(G) = 0$ for $i >\actdim(G)/2$.
\end{Conjecture2}
\begin{Definition}
	The \emph{cocompact action dimension} of a group $G, \cadim(G)$ is the least dimension of a contractible $G$-manifold.
\end{Definition}

\begin{Remark}
We do not know of a group $G$ with $\actdim(G) < \cadim(G)$.
\end{Remark}

\begin{Conjecture3}\label{cadim}
	$L^2H_i(G) = 0$ for $i >\cadim(G)/2$.
\end{Conjecture3}
Since any contractible $G$-manifold can be used to compute $L^2H_i(G)$, we have an equivalent series of conjectures in terms of manifolds.
\begin{Cadimm}\label{cadimm}
	If $(M,\d M)$ is an $n$-manifold with contractible components which admits a geometric group action, then $L^2H_i(M) = 0$ for $i >n/2$.
\end{Cadimm}

\begin{Remark}
	These conjectures put restrictions on the embedding dimension of a $K(G,1)$ space.
	For example, if $L^2H_i(G) \ne 0$, the cadim conjecture implies that any finite $K(G,1)$ space cannot embed in $\mathbb{R}^{2i-1}$.
\end{Remark}
\begin{Lemma}\label{easy}
	$\actdim$ conjecture $\Rightarrow \cadim$ conjecture $\Rightarrow$ Singer conjecture.
\end{Lemma}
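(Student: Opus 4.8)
The plan is to handle the two implications separately, noting that the first is essentially a comparison of thresholds while the second genuinely needs Poincar\'e duality.

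For the first implication, $\actdim$ conjecture $\Rightarrow \cadim$ conjecture, I would begin by observing that any contractible $G$-manifold witnessing $\cadim(G)$ carries a proper $G$-action, since a geometric action is in particular proper. Hence the manifolds defining $\cadim(G)$ form a subclass of those defining $\actdim(G)$, which gives $\actdim(G) \le \cadim(G)$, and therefore $\actdim(G)/2 \le \cadim(G)/2$. Consequently the range $i > \cadim(G)/2$ is contained in the range $i > \actdim(G)/2$, so the vanishing asserted by the $\actdim$ conjecture over the larger range immediately yields the vanishing asserted by the $\cadim$ conjecture over the smaller one.

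For the second implication, $\cadim$ conjecture $\Rightarrow$ Singer conjecture, suppose $G$ acts geometrically on a contractible $n$-manifold $M$ with $\d M = \emptyset$. Since $M$ is then a contractible $G$-manifold, we have $\cadim(G) \le n$, so $\cadim(G)/2 \le n/2$, and the $\cadim$ conjecture gives $L^2H_i(G) = 0$ for all $i > n/2$. This disposes of half the range, and the remaining task is to obtain vanishing for $i < n/2$. Here I would invoke Poincar\'e duality from Lemma~\ref{props}: because $M$ is closed, $\d M = \emptyset$, so combining the isomorphism $L^2H_i(M) \cong L^2H^i(M)$ with $L^2H^i(M) \cong L^2H_{n-i}(M, \d M) = L^2H_{n-i}(M)$ yields $L^2H_i(M) \cong L^2H_{n-i}(M)$. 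As $M$ is contractible with a geometric action, $L^2H_*(M) = L^2H_*(G)$, so for $i < n/2$ we have $n - i > n/2$, and the already-established vanishing gives $L^2H_{n-i}(G) = 0$, whence $L^2H_i(G) = 0$. Together these show $L^2H_i(G) = 0$ for all $i \ne n/2$.

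The routine part is the threshold comparison driving the first implication; the only place that genuinely uses the geometric hypotheses is the appeal to Poincar\'e duality in the second, where the \emph{closedness} of $M$ (that is, $\d M = \emptyset$) is essential. Without it, duality would pair $L^2$-homology with \emph{relative} $L^2$-homology, and the symmetry $L^2H_i(M) \cong L^2H_{n-i}(M)$ needed to fold the low-degree vanishing onto the high-degree vanishing would fail, so that is the step I would be most careful to justify.
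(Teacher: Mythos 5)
Your proof is correct and follows essentially the same route as the paper: the first implication is the trivial threshold comparison $\actdim(G)\le\cadim(G)$, and the second combines the cadim conjecture with Poincar\'e duality for a closed contractible $G$-manifold. The only (harmless) difference is that the paper quotes the equality $\actdim=\cadim=n$ in this situation, whereas you correctly observe that the inequality $\cadim(G)\le n$ already suffices.
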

\begin{proof}
	The first implication is trivial, and the second follows from applying Poincar\'{e} duality, and the fact that a group acting geometrically on a contractible $n$-manifold without boundary has $\actdim = \cadim = n.$
\end{proof}

If $(M^{2k+1},F^{2k})$ is a tidy $G$-pair, then Poincar\'{e} duality and  the cadim conjecture applied to the action of $G$ on $M$ imply $L^2H_k(M,\partial)=L^2H_{k+1}(M)=0$.
In particular, we have the following apparently weaker version of the cadim conjecture.
\begin{Conjecture5}
	If $(M^{2k+1},F^{2k})$ is a tidy pair, then the map induced by inclusion $i_{*}: L^2H_k(F, \partial) \rightarrow L^2H_k(M,\partial)$ is the zero map.
\end{Conjecture5}

\begin{Lemma}\label{l:induct} 
Suppose that $(M^{n},F)$ is a tidy $G$-pair, $N$ is $M$ cut-open by $F$, and the cadim conjecture holds for  $F$. 
Then the cadim conjecture holds for $M$ if and  only if it holds for $N$ and, if $n=2k+1$ is odd,  the weak cadim conjecture holds for $(M,F)$.
\end{Lemma}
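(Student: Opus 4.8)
The plan is to run the tidy-pair Mayer--Vietoris sequence~\eqref{e:L2MV} (equivalently, the long exact sequence of the pair $(M,N)$) degree by degree, using the hypothesis that the cadim conjecture holds for $F$ to annihilate the $F$-terms above the middle dimension. Since $\dim F = n-1$, cadim for $F$ gives $L^2H_i(F) = 0$ for $i > (n-1)/2$, and the identifications $L^2H_i(F\times\partial I)\cong L^2H_i(F)\oplus L^2H_i(F)$ and $L^2H_i(F\times I)\cong L^2H_i(F)$ then force the corresponding terms of~\eqref{e:L2MV} to vanish in that range. Throughout I would use that a weakly exact sequence of Hilbert $\mathcal{N}G$-modules is additive for the von Neumann dimension, so that a reduced $L^2$-homology module vanishes exactly when its dimension is zero.

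First I would dispose of the high degrees. If $i$ is large enough that both $L^2H_i(F\times\partial I)$ and $L^2H_{i-1}(F\times\partial I)$ vanish --- that is, $i-1 > (n-1)/2$ --- then~\eqref{e:L2MV} collapses to $L^2H_i(N)\cong L^2H_i(M)$, so in this range cadim for $M$ and cadim for $N$ are equivalent. When $n=2k$ is even this already covers every degree $i\ge k+1$ relevant to the cadim conjecture, since then $i-1\ge k > (n-1)/2 = k-\tfrac12$; hence for even $n$ the conjecture for $M$ is equivalent to the conjecture for $N$, with no further condition, matching the statement.

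It remains to treat the critical degree in the odd case $n=2k+1$. By the previous paragraph the conjecture for $M$ is equivalent to $L^2H_i(N)=0$ for $i\ge k+2$ together with $L^2H_{k+1}(M)=0$, and likewise cadim for $N$ splits off the degrees $\ge k+2$; so everything reduces to proving
\[
 L^2H_{k+1}(M)=0 \iff \bigl(L^2H_{k+1}(N)=0 \text{ and } i_*=0\bigr),
\]
where $i_*\colon L^2H_k(F,\partial F)\to L^2H_k(M,\partial M)$ is the weak cadim map. Excision and the collar structure give $L^2H_{j}(M,N)\cong L^2H_{j}(F\times I, F\times\partial I)\cong L^2H_{j-1}(F)$, so cadim for $F$ makes $L^2H_{k+2}(M,N)\cong L^2H_{k+1}(F)=0$; hence $L^2H_{k+1}(N)\to L^2H_{k+1}(M)$ is injective, and the long exact sequence reads
\[
0\to L^2H_{k+1}(N)\to L^2H_{k+1}(M)\xrightarrow{\ q\ } L^2H_k(F),
\]
with $\ker q$ the closed image of the first map. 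By additivity of the von Neumann dimension this already gives $L^2H_{k+1}(M)=0$ if and only if $L^2H_{k+1}(N)=0$ and $q=0$.

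The one remaining point, which I expect to be the main obstacle, is to identify $q$ with the weak cadim map. Poincar\'e--Lefschetz duality supplies isomorphisms $L^2H_{k+1}(M)\cong L^2H_k(M,\partial M)$ and $L^2H_k(F)\cong L^2H_k(F,\partial F)$, the latter being the self-duality of the middle dimension of the even-dimensional $F$. I claim that under these identifications the Hilbert-module adjoint $q^{*}$ of the connecting map is exactly $i_*$; granting this, $q=0\iff q^{*}=0\iff i_*=0$, and the displayed equivalence follows, completing the proof by assembling the three degree ranges. Establishing $q^{*}=i_*$ amounts to a naturality square for Poincar\'e--Lefschetz duality relating the inclusion $(F,\partial F)\hookrightarrow(M,\partial M)$ to the connecting homomorphism of the cut. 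The delicate features are that $F$ is only a \emph{neat} submanifold, so $\partial F = F\cap\partial M$ must be tracked through the duality, and that all sequences are merely weakly exact; I would therefore phrase the comparison in terms of von Neumann dimensions of images, where the duality between $q$ and $i_*$ guarantees $\dim\overline{\image q}=\dim\overline{\image i_*}$, rather than relying on literal exactness.
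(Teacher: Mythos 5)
Your proposal is correct and follows essentially the same route as the paper: use the Mayer--Vietoris/pair sequence together with the cadim conjecture for $F$ to kill all degrees except the middle one in the odd case, and then identify the connecting map out of $L^2H_{k+1}(M)$ with the weak cadim inclusion map via the naturality of Poincar\'e--Lefschetz duality (the paper records exactly this as a commutative diagram with duality isomorphisms as vertical maps). Your phrasing via the Hilbert adjoint and von Neumann dimensions is a slightly more explicit treatment of the weak exactness, but the argument is the same.
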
	
\begin{proof}
	First, suppose that the cadim conjecture holds for $M$. 
	We have $L^2H_i(M) = 0$ for $i > n/2$, and $L^2H_i(F) = 0$ for $i > (n-1)/2$, so the claim follows from the Mayer--Vietoris sequence~\eqref{e:L2MV}.   
	
	Next, suppose the cadim conjecture holds for $N$, so that we have $L^2H_i(N) = 0$ for $i > n/2$, and $L^2H_i(F) = 0$ for $i > (n-1)/2$. 
	Then the Mayer--Vietoris sequence~\eqref{e:L2MV} implies $L^2H_i(M) = 0$ for $i > (n+1)/2$.
	
	Now, we only have to consider the case where $n=2k+1$ and $i=k+1$. 
	Applying Poincar\'{e} duality to the Mayer--Vietoris sequence gives the following diagram, where the vertical maps are the duality isomorphisms:
\[
\begin{tikzcd}
	L^2H_{k+1}(F \times  I) \oplus L^2H_{k+1}(N) \dar \rar &L^2H_{k+1}(M) \rar{\partial_{*}} \dar & L^2H_k(F \times \partial I)  \dar  \\
	L^2H^{k}(F \times  I,\partial) \oplus L^2H^{k}(N,\partial) \uar{\cong} \rar & L^2H^k(M,\partial) \uar{\cong} \rar{i^{*}}  & L^2H^k(F \times \partial I, \partial)  \uar{\cong}
\end{tikzcd}
\]
Since $F \times \partial I$ is just two copies of $F$, the weak cadim conjecture implies that $i^{*}=0$ and the result follows. 
\end{proof}

\begin{Theorem}\label{induct}
The cadim conjecture in dimension $2k-1$ implies the cadim conjecture in dimension $2k$ for manifolds with hierarchies. 
The cadim conjecture in dimension $2k$ and the weak cadim conjecture in dimension $2k+1$  imply the cadim conjecture in dimension $2k+1$ for manifolds with hierarchies.	
\end{Theorem}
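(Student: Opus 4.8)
The plan is to prove both implications by a single induction on the length of the hierarchy, with Lemma~\ref{l:induct} providing the one-step passage and Lemma~\ref{l:stab} feeding the induction. Fix a contractible $n$-dimensional $G$-manifold $M$ carrying a hierarchy $(M_0,F_0),\dots,(M_{m+1},\emptyset)$, where $n$ is either $2k$ or $2k+1$, and assume the corresponding lower-dimensional hypotheses. The goal is to show $L^2H_i(M)=0$ for $i>n/2$, i.e.\ that the cadim conjecture holds for $(M,G)$.

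First I would settle the base case, where the hierarchy has no cutting submanifold and $M$ is a disjoint union of compact contractible manifolds. A proper cocompact action on a compact space has finite stabilizers, and the $L^2$-homology of a finite group vanishes in positive degrees, so $L^2H_i(M)=0$ for all $i\ge 1$ and in particular for $i>n/2$. Throughout, the induction principle of Lemma~\ref{props} lets me check the cadim conjecture one orbit of components at a time, which is what makes the reduction to stabilizer actions legitimate.

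For the inductive step I cut $M=M_0$ along $F_0$ to get $M_1$. By Lemma~\ref{l:stab} each component $M_1^0$ inherits a hierarchy for $\stab_G(M_1^0)$ of strictly shorter length; the action stays geometric because $M_1^0$ is a closed invariant subset of the cocompact $M$, and it remains $n$-dimensional, so the same lower-dimensional hypotheses apply. By the inductive hypothesis the cadim conjecture holds for each $M_1^0$, hence for $M_1$ by the induction principle. To transport this across the cut I apply Lemma~\ref{l:induct} to the tidy pair $(M,F_0)$, which requires the cadim conjecture for $F_0$ itself. Here $F_0$ has dimension $n-1$, its components are contractible by tidiness, and its stabilizers act geometrically; crucially, the hypothesis is the \emph{full} cadim conjecture in dimension $n-1$, so it applies to $F_0$ even though we have no hierarchy for $F_0$ in hand. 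This is precisely dimension $2k-1$ when $n=2k$ and dimension $2k$ when $n=2k+1$. For even $n$ Lemma~\ref{l:induct} now yields the cadim conjecture for $M$ outright; for odd $n=2k+1$ it demands in addition the weak cadim conjecture for $(M,F_0)$, and since $(M,F_0)$ is a tidy pair in dimension $2k+1$, the weak cadim conjecture in that dimension supplies exactly the vanishing $i_*=0$ that is needed.

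The dimension arithmetic is what forces the two cases to split as the statement predicts, and the genuinely delicate point is the bookkeeping of components: cutting may increase the number of $G$-orbits of components (as noted after Lemma~\ref{cutting}), so it is essential that the induction principle permits an orbit-by-orbit verification and that Lemma~\ref{l:stab} genuinely shortens the hierarchy on each piece. I expect the main obstacle to be confirming that every hypothesis of Lemma~\ref{l:induct}---contractibility of the components after cutting, geometricity of the induced stabilizer actions, and the codimension-one dimension count for $F_0$---survives each cutting step; all of these are guaranteed by Lemma~\ref{cutting} and the tidiness conditions, after which Lemma~\ref{l:induct} performs the entire homological computation.
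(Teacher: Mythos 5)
Your proposal is correct and follows essentially the same route as the paper: the paper's proof is exactly an induction on the length of the hierarchy, using Lemma~\ref{l:stab} to restrict to components of $M_1$ with shorter hierarchies, Lemma~\ref{l:induct} to cross each cut (with the full lower-dimensional cadim hypothesis supplying the conclusion for the codimension-one $F_0$, and the weak cadim conjecture supplying the extra input in the odd case), and the vanishing of $L^2$-homology for compact contractible components as the base case. Your write-up merely makes explicit the bookkeeping (induction principle, geometricity of the induced actions) that the paper leaves implicit.
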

\begin{proof}
This is immediate by induction on the length of the hierarchy, using Lemmas~\ref{l:stab} and~\ref{l:induct}, and noting that the cadim conjecture holds for manifolds with compact components.
\end{proof}	

A somewhat surprising result is the converse to the second implication in Lemma~\ref{easy}.
\begin{Theorem}\label{equiv}
The Singer conjecture and the cadim conjecture are equivalent.
\end{Theorem}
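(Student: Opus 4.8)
The only nontrivial content is the implication Singer conjecture $\Rightarrow$ cadim conjecture, since the reverse is already contained in Lemma~\ref{easy}. So I would start by assuming the Singer conjecture in all dimensions, let $G$ act geometrically on a contractible $n$-manifold with boundary $M$ realizing $\cadim(G)=n$, and aim to prove $L^2H_i(G)=0$ for $i>n/2$. The guiding idea is to \emph{remove the boundary while keeping the dimension equal to $n$}, so that the Singer conjecture, which speaks only about manifolds without boundary, becomes applicable; the natural tool is the reflection group trick, i.e.\ the basic construction $\cU$ of Section~\ref{s:cox}.

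Concretely, I would first equip $\partial M$ with a $G$-invariant mirror structure $\{M_s\}_{s\in S}$ whose panels meet orthogonally and whose nerve $L$ is a flag complex, for instance by pulling back a $G$-equivariant cubical or flag-triangulated structure on the compact orbit space $\partial M/G$. Letting $W=W_L$ be the associated right-angled Coxeter group, I form $\widehat M:=\cU(W,M)$, the union of $W$-copies of $M$ glued along mirrors, with the proper cocompact action of $\widehat G:=G\ltimes W$ (here $G$ permutes the finitely many orbits of panels, hence acts on $W$ by permuting generators). Because $M$ is a manifold with corners and the panels meet at right angles, the local model of $\widehat M$ at a codimension-$k$ stratum $x$ is $\rr^{n-k}\times\cU(W_{S(x)},\rr^k_+)\cong\rr^{n-k}\times\rr^k\cong\rr^n$, with $W_{S(x)}\cong(\zz/2)^k$; thus $\widehat M$ is an $n$-manifold without boundary. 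Note this local computation needs only that $M$ be a cornered manifold, not that $\partial M$ be a sphere, which is why the construction applies to an arbitrary cadim-realizing $M$.

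Next I would check that $\widehat M$ is contractible, so that the Singer conjecture applies to $\widehat G$ acting on $\widehat M$ and yields $L^2H_i(\widehat G)=L^2H_i(\widehat M)=0$ for all $i\neq n/2$. This is the step where the mirror structure must be chosen carefully: contractibility of a basic construction requires not only that $M$ be contractible but also that the unions of mirrors $\bigcup_{s\in T}M_s$ be acyclic for each spherical subset $T\subset S$, which I would arrange by building the mirror structure from a barycentric, cone-type subdivision exactly as in the construction of the Davis chamber $K$. I then single out the vanishing of $L^2H_j(\widehat M)$ in the \emph{low} range $j<n/2$ as the information to be exploited.

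Finally I would transfer this vanishing back to $G$. The walls of $\widehat M$, i.e.\ the fixed sets of the reflections in $W$, form a $\widehat G$-invariant tidy collection cutting $\widehat M$ back into copies of the chamber $M$, precisely as in Theorem~\ref{hier}; repeated application of the cutting Mayer--Vietoris sequence~\eqref{e:L2MV} together with the induction principle of Lemma~\ref{props} (equivalently, a Davis-type decomposition of $L^2H_*(\cU(W,M))$) should identify the relative groups $L^2H_*(M,\partial M)$ as a summand of $L^2H_*(\widehat M)$. Granting this, $L^2H_j(\widehat M)=0$ for $j<n/2$ gives $L^2H_j(M,\partial M)=0$ for $j<n/2$, and Poincar\'e--Lefschetz duality (Lemma~\ref{props}), $L^2H_i(G)=L^2H_i(M)\cong L^2H_{n-i}(M,\partial M)$, converts this into $L^2H_i(G)=0$ for $i>n/2$, as required; observe that it is the two-sidedness of the Singer conclusion on the closed manifold $\widehat M$ that, after duality swaps low and high degrees, produces the one-sided cadim vanishing. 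The hard part will be precisely this last identification: organizing the Mayer--Vietoris bookkeeping across the infinitely many walls so that the $G$-module $L^2H_*(M,\partial M)$ is isolated as the relevant piece of $L^2H_*(\widehat M)$, and, secondarily, verifying the acyclicity of mirror intersections needed to make $\widehat M$ genuinely contractible.
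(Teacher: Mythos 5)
Your overall architecture --- close up $\partial M$ by the equivariant reflection group trick, apply the Singer conjecture to the boundaryless contractible manifold $\widehat M=\cU(W,M)$ with its geometric $W\rtimes G$-action, then transfer the vanishing back to $M$ through the wall structure --- is exactly the paper's. The reverse implication via Lemma~\ref{easy} is also correctly identified. The problem is the transfer step, which you yourself flag as ``the hard part'': there is no known way to identify $L^2H_*(M,\partial M)$ as a summand of $L^2H_*(\widehat M)$. The ordinary-homology decomposition $H_*(\cU(W,X))\cong\bigoplus_{w\in W}H_*(X,X^{\mathrm{In}(w)})$ has no $L^2$-analogue for infinite $W$, because $L^2(W)$ does not split $W$-equivariantly into the corresponding pieces; this is precisely the content of the (open, in general) decomposition conjectures for weighted $L^2$-homology of Coxeter groups, so no amount of Mayer--Vietoris bookkeeping will produce it for free. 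What the Mayer--Vietoris sequence~\eqref{e:L2MV} actually gives, when you cut $\widehat M$ along a $W\rtimes G$-orbit of walls $F$, is that $L^2H_i$ of the cut-open piece is controlled by $L^2H_i(\widehat M)$ \emph{and} $L^2H_i(F)$; to kill the wall term you must already know the vanishing of $L^2H_i(F)$ for $i>(n-1)/2$, i.e.\ the cadim conjecture in dimension $n-1$ applied to the $(n-1)$-dimensional contractible walls with their geometric stabilizer actions.

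This is why the paper does not prove ``Singer in dimension $n$ $\Rightarrow$ cadim in dimension $n$'' outright. Its key lemma (Lemma~\ref{l:key}) proves the weaker statement that Singer in dimension $n$ \emph{together with} cadim in dimension $n-1$ implies cadim in dimension $n$, by running Lemma~\ref{l:induct} down the hierarchy of $\widehat M$ that ends in disjoint copies of $M$; Theorem~\ref{equiv} then follows by induction on dimension, anchored at the trivial low-dimensional cases. Your proposal is missing this induction on dimension entirely, and replaces it with an unjustified (and, as stated, unobtainable) direct-sum identification. Note also that the paper lands directly on $L^2H_i(M)=0$ for $i>n/2$ at the bottom of the hierarchy, with no need for the detour through $L^2H_j(M,\partial M)$ in low degrees and Poincar\'e--Lefschetz duality; that detour is harmless but unnecessary once the inductive mechanism is in place.
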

The proof of the theorem follows immediately from the following key lemma and induction.
\begin{Lemma}\label{l:key}
The Singer conjecture in dimension $n$ and the cadim conjecture in dimension $(n-1)$ imply the cadim conjecture in dimension $n$.
\end{Lemma}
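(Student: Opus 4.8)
\emph{The plan is} to deduce the boundary case from the Singer conjecture by building, out of $M$, a contractible manifold \emph{without} boundary on which a larger group acts geometrically. The naive attempt of doubling $M$ along $\partial M$ does not work: the double is a closed manifold but is essentially never contractible (doubling a disk already gives a sphere), and no capping can help, since a closed contractible manifold is a point. I would instead eliminate the boundary by reflecting it off \emph{to infinity} via the Davis reflection trick. Choose a $G$-invariant flag triangulation of $\partial M$, let $W$ be the associated right-angled Coxeter group, and give $M$ the mirror structure whose panels are the closed stars of the vertices in the barycentric subdivision of this triangulation. Then the panels and all of their nonempty intersections are contractible of dimension at most $n-1$, and by Davis's acyclicity criterion the basic construction $\mathcal{U}(W,M)$ is a contractible $n$-dimensional manifold (a homology manifold if $\partial M$ is merely a homology sphere) without boundary, on which $\widehat{G} = W \rtimes G$ acts geometrically.

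Granting this, the key steps are as follows. First, I would apply the Singer conjecture in dimension $n$ to the action of $\widehat{G}$ on $\mathcal{U}(W,M)$, obtaining $L^2H_i(\mathcal{U}(W,M)) = 0$ for $i > n/2$. Second, I would use a Mayer--Vietoris decomposition of the basic construction, as in \cite{do01}, to express $L^2H_*(\mathcal{U}(W,M))$, as a $\widehat{G}$-module, in terms of the $L^2$-homology of the chamber $M$ relative to unions of panels, together with the $L^2$-homology of the panels and their intersections. Third, I would apply the cadim conjecture in dimension $n-1$ (and in lower dimensions, which is what the induction in Theorem~\ref{equiv} supplies) to the panels and their intersections, which are contractible of dimension at most $n-1$, in order to kill their contributions above $(n-1)/2$. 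Finally, combining these three facts with Poincar\'{e} duality on $M$ isolates and annihilates the chamber term, yielding $L^2H_i(M) = 0$ for $i > n/2$; since $M$ is a contractible $G$-manifold, this is exactly the cadim conjecture in dimension $n$.

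The main obstacle is the second step: setting up the Mayer--Vietoris decomposition precisely enough to \emph{isolate} the contribution of $M$, so that the two hypotheses enter in the correct dimensions, and carrying this out at the level of weakly exact sequences so that one obtains genuine vanishing of the reduced $L^2$-homology and not merely of von Neumann dimensions. Secondary difficulties are arranging the mirror structure to be $G$-invariant, so that $\widehat{G}$ acts geometrically on a contractible space and $L^2H_*(M)$ is computed $G$-equivariantly, and checking that the construction behaves well when $\partial M$ is only a homology sphere rather than a genuine sphere, so that $\mathcal{U}(W,M)$ is a space to which Poincar\'{e} duality and the Singer conjecture may legitimately be applied.
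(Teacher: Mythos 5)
Your proposal is correct and its central idea --- eliminating $\partial M$ by the equivariant Davis reflection group trick, applying the Singer conjecture to the resulting boundaryless contractible $W\rtimes G$-manifold $\mathcal{U}(W,M)$, and then feeding the cadim conjecture in dimension $n-1$ to the walls to descend back to $M$ --- is exactly the paper's proof. The only divergence is in the step you flag as the main obstacle, namely how to isolate the contribution of the chamber $M$ inside $L^2H_*(\mathcal{U}(W,M))$. You propose a global Mayer--Vietoris decomposition of the basic construction in the style of \cite{do01}; the paper instead observes that the $W\rtimes G$-orbits of walls form a tidy collection, so by (the argument of) Theorem~\ref{tidy} the action on $\mathcal{U}(W,M)$ admits a hierarchy ending in disjoint copies of $M$, and then simply iterates the easy direction of Lemma~\ref{l:induct}: cadim for the ambient manifold plus cadim for the cutting wall (which is where the dimension-$(n-1)$ hypothesis enters) gives cadim for the cut-open manifold, one orbit of walls at a time, until only copies of $M$ remain. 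This reuses machinery already set up earlier in the paper, avoids any delicate bookkeeping with unions of panels, and stays entirely at the level of the three-term weakly exact Mayer--Vietoris sequence~\eqref{e:L2MV}, so the issue of genuine vanishing versus vanishing of von Neumann dimensions never arises. Two smaller remarks: the weak cadim conjecture is not needed here because only the forward direction of Lemma~\ref{l:induct} is used; and your worry about $\partial M$ being merely a homology sphere is moot, since $\partial M$ is an honest $(n-1)$-manifold and the local hyperplane-arrangement structure of the walls makes $\mathcal{U}(W,M)$ a genuine manifold without boundary.
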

\begin{proof}
We use the equivariant Davis reflection group trick as in \cites{dl03a, d08}.
The idea is that the trick turns the input of the cadim conjecture (a contractible manifold with boundary and geometric group action) into the input of the Singer conjecture (a contractible manifold without boundary and geometric group action).
In addition, the newly constructed manifold action admits a hierarchy ending at a disjoint union of copies of the original.
Once this has been established, the proof is more or less the same as that of Theorem \ref{induct}.

	Suppose that $G$ acts geometrically on a contractible $n$-manifold with boundary $(M, \partial M)$.
	Let $L$ be a flag triangulation of $\partial M$ that is equivariant with respect to the $G$-action, and suppose that the stabilizer of any simplex fixes the stabilizer pointwise (these triangulations can always be constructed).
	We can now apply the equivariant reflection group trick.
	Indeed, $L$ determines a right-angled Coxeter group $W$, and we can form the basic construction $\mathcal{U} = \mathcal{U}(W,M)$.
	By the conditions imposed on $L$, there is an action of $G$ on $W$ which determines a semi-direct product $W \rtimes G$.
	Since  $ \mathcal{U}/W \rtimes G \cong M/G$, $W \rtimes G$ acts cocompactly on $\mathcal{U}$.	
	Here are some key properties of the reflection group trick: 
	\begin{itemize}
		\item Each wall is a codimension one, contractible submanifold of $N$.
		\item There are a finite number of $W \rtimes G$-orbits of walls, and each orbit is a disjoint union of walls.
		\item Any non-empty intersection of orbits of walls is itself a Davis complex and is therefore contractible.
		\item The stabilizer of each wall acts geometrically on the wall.
		\item The collection of walls looks locally like a right-angled hyperplane arrangement in $\mathbb{R}^n$.
	\end{itemize}
		
	It follows similarly to Theorem~\ref{tidy} that the $W \rtimes G$ action on $\mathcal{U}$ admits a hierarchy that ends in disjoint copies of $M$, where the cutting submanifolds are $W \rtimes G$-orbits of walls.
	Since $\cU$ has no boundary, and we are assuming that the Singer conjecture holds for $\mathcal{U}$,   the cadim conjecture holds for $\cU$.
	Since we are also assuming the cadim conjecture in dimension $n-1$, it follows by applying Lemma~\ref{l:induct}	inductively that the cadim conjecture holds for the original $M$.
\end{proof}

Theorem~\ref{t:lowsinger}  and Lemma~\ref{l:key} imply:
\begin{Corollary}\label{c:lowcadim}
The cadim conjecture holds true in dimensions $\le 3$. 
\end{Corollary}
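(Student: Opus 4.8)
The plan is to deduce Corollary~\ref{c:lowcadim} directly from the two results it cites, Theorem~\ref{t:lowsinger} and Lemma~\ref{l:key}, by a short inductive bootstrapping argument rather than by any fresh geometric construction. The target is the statement that the cadim conjecture holds in every dimension $n \le 3$, so I would simply verify the conjecture dimension by dimension for $n = 0, 1, 2, 3$.

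First I would dispense with the small dimensions. In dimension $0$ the conjecture is vacuous, and in dimensions $1$ and $2$ it holds for trivial reasons: a contractible $G$-manifold of dimension $n \le 2$ can only support nonzero $L^2$-homology in degrees $\le n/2 \le 1$, and the degrees demanded to vanish lie strictly above $n/2$, so there is essentially nothing to check. (One can say this cleanly by noting that Theorem~\ref{t:lowsinger} already records the Singer conjecture in these dimensions and then feeding this into Lemma~\ref{l:key}, but it is cleaner to treat $n \le 2$ as base cases.) These base cases supply the input "cadim conjecture in dimension $n-1$" that Lemma~\ref{l:key} requires.

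The substantive step is dimension $3$. Here I would invoke Theorem~\ref{t:lowsinger}, which gives the Singer conjecture in dimension $3$, together with the already-established cadim conjecture in dimension $2$. Lemma~\ref{l:key} states precisely that the Singer conjecture in dimension $n$ and the cadim conjecture in dimension $n-1$ imply the cadim conjecture in dimension $n$; applying it with $n = 3$ yields the cadim conjecture in dimension $3$. Combining all dimensions $n \le 3$ completes the argument.

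I do not expect any genuine obstacle, since all the analytic and geometric work has been absorbed into the two cited results; the only point requiring mild care is making sure the induction is seeded correctly, i.e.\ that the low-dimensional cadim cases are genuinely available to serve as the dimension $n-1$ hypothesis of Lemma~\ref{l:key}. The reliance of Lemma~\ref{l:key} on the equivariant reflection group trick and on Lemma~\ref{l:induct} is already handled upstream, so here the proof is essentially a one-line citation once the base cases are in place.
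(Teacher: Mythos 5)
Your proposal is correct and matches the paper's (essentially implicit) argument: the paper derives the corollary exactly by combining Theorem~\ref{t:lowsinger} with Lemma~\ref{l:key} and inducting on dimension, with the low-dimensional cases serving as trivial base cases. The only minor caveat is that your justification for $n\le 2$ ("can only support nonzero $L^2$-homology in degrees $\le n/2$") is close to restating the conclusion; one should note that, e.g., $L^2H_2$ of a contractible $2$-manifold vanishes by Poincar\'e duality, but this is routine and consistent with the paper's treatment of low dimensions as trivial.
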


 Now, Corollary \ref{c:lowcadim}, Theorem~\ref{induct} and Lemma~\ref{easy} imply our main theorem.

\begin{Theorem}\label{hierarchy}
The Singer conjecture holds for all groups that admit a hierarchy in dimensions $\le 4$.
\end{Theorem}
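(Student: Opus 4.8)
The plan is to reduce the Singer conjecture to the cadim conjecture via Lemma~\ref{easy}, and then to establish the cadim conjecture for manifolds with hierarchies in dimensions $\le 4$ by bootstrapping from the known low-dimensional cases.

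First I would note that, by the second implication of Lemma~\ref{easy}, it suffices to prove the cadim conjecture for all groups admitting a hierarchy in dimensions $\le 4$: the Singer conjecture then follows, since for a group acting geometrically on a contractible boundaryless $n$-manifold one has $\actdim = \cadim = n$, and Poincar\'{e} duality converts the upper-half vanishing of the cadim conjecture into vanishing in all degrees $i \ne n/2$. The cases $n \le 3$ require no hierarchy whatsoever: Corollary~\ref{c:lowcadim} already gives the cadim conjecture unconditionally in dimensions $\le 3$, resting on the Singer conjecture in those dimensions (Theorem~\ref{t:lowsinger}, via Geometrization and Perelman) together with the reflection-trick Lemma~\ref{l:key}.

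The remaining case is $n = 4$, where the hierarchy becomes essential. I would invoke the first assertion of Theorem~\ref{induct} with $k = 2$: the cadim conjecture in dimension $2k - 1 = 3$ implies the cadim conjecture in dimension $2k = 4$ for manifolds with hierarchies. Since dimension $3$ is furnished by Corollary~\ref{c:lowcadim}, the cadim conjecture holds in dimension $4$ for every manifold carrying a hierarchy. Unwinding Theorem~\ref{induct}, this is an induction on the length of the hierarchy: one cuts $M$ down to the compact contractible terminal pieces $M_{m+1}$, for which the cadim conjecture is automatic, and then climbs back up one cut at a time using Lemma~\ref{l:induct}, with Lemma~\ref{l:stab} and the induction principle of Lemma~\ref{props} handling the stabilizer actions on the components produced by each cut.

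The step I expect to be the crux is the even-dimensional case of Lemma~\ref{l:induct}. The decisive point is that when $n = 4$ is even, reassembling $M$ from the cut-open manifold $N$ and the codimension-one submanifold $F$ through the Mayer--Vietoris sequence~\eqref{e:L2MV} needs only $L^2H_i(N) = 0$ and $L^2H_i(F) = 0$ above the relevant ranges, and \emph{not} the weak cadim conjecture, which is required solely in the odd-dimensional reassembly. Hence the passage from dimension $3$ to dimension $4$ is unobstructed, and no analogue of a weak conjecture has to be verified along the way. Combining Corollary~\ref{c:lowcadim} for $n \le 3$, Theorem~\ref{induct} for $n = 4$, and Lemma~\ref{easy} to pass from the cadim to the Singer conjecture yields the claim.
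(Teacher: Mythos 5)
Your proposal is correct and follows the paper's argument exactly: the paper deduces Theorem~\ref{hierarchy} from Corollary~\ref{c:lowcadim} (cadim in dimensions $\le 3$), the first assertion of Theorem~\ref{induct} with $k=2$ (to reach dimension $4$ for manifolds with hierarchies), and Lemma~\ref{easy} (cadim implies Singer). Your additional unwinding of Theorem~\ref{induct} and the observation that the even-dimensional case of Lemma~\ref{l:induct} avoids the weak cadim conjecture are both accurate.
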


Theorem \ref{hierarchy} and Theorem \ref{hier} now imply our main applications:
\begin{Theorem}
	If $W$ is a Coxeter group with nerve a triangulation of $S^3$, then the Singer conjecture holds for $W$ acting on $\Sigma(W,S)$.\end{Theorem}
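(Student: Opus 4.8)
The plan is to deduce this directly from the two headline theorems together with the finite-index behaviour of $L^2$-homology. First I would pin down the dimension: the hypothesis that the nerve $L$ is a triangulation of $S^3 = S^{n-1}$ forces $n = 4$, so by the Lemma describing $\Sigma(W,S)$ the Davis complex is a contractible $4$-manifold without boundary on which $W$ acts properly and cocompactly, i.e.\ geometrically. Thus $\Sigma(W,S)$ is exactly the kind of space to which the Singer conjecture applies, with middle dimension $n/2 = 2$, and proving the conjecture for $W$ amounts to showing $L^2H_i(W) = 0$ for $i \ne 2$.

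The obstruction to attacking $W$ head-on is that $W$ has torsion, so the $W$-orbit of a wall need not have contractible components (translates can intersect), and the hierarchy construction is unavailable at the level of $W$ itself. I would therefore pass to the subgroup supplied by Theorem~\ref{hier}: there is a finite-index torsion-free normal subgroup $\Gamma \le W$ whose action on $\Sigma(W,S)$ admits a hierarchy, the point being that a torsion-free subgroup enjoys the trivial intersection property. Since $\dim \Sigma(W,S) = 4$, this is a hierarchy in dimension $4$, so $\Gamma$ is a group admitting a hierarchy in dimension $\le 4$. Theorem~\ref{hierarchy} then applies verbatim and yields the Singer conjecture for $\Gamma$, namely $L^2H_i(\Gamma) = 0$ for $i \ne 2$.

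Finally I would transfer this vanishing from $\Gamma$ back up to $W$. Both groups act geometrically on the same contractible manifold $\Sigma(W,S)$, so their reduced $L^2$-homology groups are computed from the same chain complex $C_*^{(2)}(\Sigma)$ over the respective group von Neumann algebras, and the standard restriction formula gives the proportionality $b_i^{(2)}(\Gamma) = [W:\Gamma]\, b_i^{(2)}(W)$ for the associated $L^2$-Betti numbers. Since $[W:\Gamma]$ is finite and positive, $b_i^{(2)}(\Gamma) = 0$ forces $b_i^{(2)}(W) = 0$; as the $L^2$-Betti number is the von Neumann dimension of the reduced $L^2$-homology, this is precisely $L^2H_i(W) = 0$ for $i \ne 2$, the desired conclusion.

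I expect no serious obstacle, since the substantive work is already contained in Theorems~\ref{hier} and~\ref{hierarchy}; the one step needing genuine care is the last paragraph, where one must justify that the Singer conjecture for the finite-index subgroup $\Gamma$ implies it for $W$. This is not automatic from the definitions, but it rests on the well-known multiplicativity of $L^2$-Betti numbers under passage to finite-index subgroups, and because the index is finite the vanishing transfers cleanly.
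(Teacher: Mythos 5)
Your proof is correct and follows the paper's intended route exactly: the paper obtains this theorem by simply combining Theorem~\ref{hier} (pass to a finite-index torsion-free $\Gamma$ whose action on the $4$-dimensional complex $\Sigma(W,S)$ admits a hierarchy) with Theorem~\ref{hierarchy}. The finite-index transfer that you justify via multiplicativity of $L^2$-Betti numbers is in fact immediate with the paper's definition of $L^2$-homology as harmonic square-summable chains, since $C_*^{(2)}(\Sigma(W,S))$ and the kernel of the Laplacian on it do not depend on whether one regards $W$ or $\Gamma$ as the acting group.
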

	
\begin{Theorem}
 If $W$ is a Coxeter group with nerve a triangulation of $D^3$, then $L^2H_i(W) = 0$ for $i > 2$.
\end{Theorem}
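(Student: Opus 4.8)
The plan is to recognize this statement as the cadim conjecture in dimension $4$ applied to the Davis complex of $W$, and to run the same hierarchy argument that proves the preceding closed-manifold theorem, with the one modification that $\Sigma(W,S)$ now carries boundary.

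First I would record that, since $L$ triangulates $D^3 = D^{n-1}$ with $n=4$, the Davis complex $\Sigma = \Sigma(W,S)$ is a contractible $4$-manifold with boundary on which $W$ acts properly and cocompactly; in particular $\cadim(W)\le 4$, and by the definition of the $L^2$-homology of a group it suffices to show $L^2H_i(\Sigma;W)=0$ for $i>2$. The full $W$-action need not admit a hierarchy because of torsion, so---exactly as in Theorem~\ref{hier}---I would pass to a finite-index torsion-free normal subgroup $\Gamma \le W$. By the Remark following Theorem~\ref{hier}, the $\Gamma$-action on $\Sigma$ admits a $4$-hierarchy whose cutting submanifolds are $\Gamma$-orbits of walls: the Millson--Jaffee trivial intersection property guarantees that each orbit is a disjoint union of contractible walls, and Lemma~\ref{walls} together with local flatness of the walls (now including where they meet $\partial\Sigma$) supplies tidiness, so Theorem~\ref{tidy} applies and produces the hierarchy.

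Next I would feed this hierarchy into the induction machinery. Corollary~\ref{c:lowcadim} gives the cadim conjecture in dimension $3$, and the first implication of Theorem~\ref{induct} (with $k=2$) upgrades this to the cadim conjecture in dimension $4$ for manifolds with hierarchies. Applying this to the $\Gamma$-action on the contractible $4$-manifold-with-boundary $\Sigma$ yields $L^2H_i(\Gamma)=L^2H_i(\Sigma;\Gamma)=0$ for $i>2$.

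Finally I would descend from $\Gamma$ back to $W$. Since $\Gamma$ has finite index in $W$ and both groups act on the same contractible space $\Sigma$, the von Neumann dimensions obey $\dim_{W}=[W:\Gamma]^{-1}\dim_{\Gamma}$, so the vanishing of the reduced $L^2$-homology for $\Gamma$ forces $L^2H_i(W)=0$ for $i>2$. I expect the only points needing genuine care---rather than a true obstacle---to be (i) confirming the Remark's claim that the boundary case of Theorem~\ref{hier} goes through, i.e.\ checking tidiness of the wall arrangement near $\partial\Sigma$ where the ambient local model is $\mathbb{R}^n_{+}$ rather than $\mathbb{R}^n$, and (ii) the standard finite-index scaling of $L^2$-Betti numbers used to transfer vanishing from $\Gamma$ to $W$.
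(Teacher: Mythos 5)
Your proposal is correct and follows essentially the same route the paper intends: pass to a finite-index torsion-free subgroup whose action on the Davis complex (now a contractible $4$-manifold with boundary, per the Remark after Theorem~\ref{hier}) admits a hierarchy, apply the cadim conjecture in dimension $4$ for manifolds with hierarchies (Corollary~\ref{c:lowcadim} plus Theorem~\ref{induct}), and transfer the vanishing back to $W$ by the finite-index scaling of von Neumann dimension. The paper leaves all of this implicit in the phrase ``Theorem~\ref{hierarchy} and Theorem~\ref{hier} now imply our main applications,'' so your write-up is just a faithful expansion of the intended argument.
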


\begin{Remark}
The hierarchies for Coxeter groups have more structure in the following sense: the hierarchy for $\Sigma(W,S)$ induces a hierarchy on each wall. 
This means that if we restrict our attention to Coxeter groups, we can relax many of the assumptions. 
For instance, Theorem \ref{induct} restricted to Coxeter groups need only assume the $\cadim$ conjecture in dimension $2k-1$ for manifolds with hierarchies.
\end{Remark}

\begin{bibdiv}
\begin{biblist}

\bib{bkk02}{article}{
      author={Bestvina, Mladen},
      author={Kapovich, Michael},
      author={Kleiner, Bruce},
       title={Van {K}ampen's embedding obstruction for discrete groups},
        date={2002},
        ISSN={0020-9910},
     journal={Invent. Math.},
      volume={150},
      number={2},
       pages={219\ndash 235},
         url={http://dx.doi.org/10.1007/s00222-002-0246-7},
      review={\MR{1933584 (2004c:57060)}},
}

\bib{cd95b}{article}{
      author={Charney, Ruth},
      author={Davis, Michael~W.},
       title={The {E}uler characteristic of a nonpositively curved, piecewise
  {E}uclidean manifold},
        date={1995},
        ISSN={0030-8730},
     journal={Pacific J. Math.},
      volume={171},
      number={1},
       pages={117\ndash 137},
      review={\MR{96k:53066}},
}

\bib{d83}{article}{
      author={Davis, Michael~W.},
       title={Groups generated by reflections and aspherical manifolds not
  covered by {E}uclidean space},
        date={1983},
        ISSN={0003-486X},
     journal={Ann. of Math. (2)},
      volume={117},
      number={2},
       pages={293\ndash 324},
      review={\MR{86d:57025}},
}

\bib{d08}{book}{
      author={Davis, Michael~W.},
       title={The geometry and topology of {C}oxeter groups},
      series={London Mathematical Society Monographs Series},
   publisher={Princeton University Press},
     address={Princeton, NJ},
        date={2008},
      volume={32},
        ISBN={978-0-691-13138-2; 0-691-13138-4},
      review={\MR{2360474 (2008k:20091)}},
}

\bib{de14}{misc}{
      author={Davis, Michael~W.},
      author={Edmonds, Allan~L.},
       title={{E}uler characteristics of generalized {H}aken manifolds},
        date={2014},
         url={http://arxiv.org/abs/1402.7096},
}

\bib{dl03a}{incollection}{
      author={Davis, Michael~W.},
      author={Leary, Ian~J.},
       title={Some examples of discrete group actions on aspherical manifolds},
        date={2003},
   booktitle={High-dimensional manifold topology},
   publisher={World Sci. Publ., River Edge, NJ},
       pages={139\ndash 150},
         url={http://dx.doi.org/10.1142/9789812704443_0006},
      review={\MR{2048719 (2005f:57050)}},
}

\bib{do01}{article}{
      author={Davis, Michael~W.},
      author={Okun, Boris},
       title={Vanishing theorems and conjectures for the {$\ell^2$}-homology of
  right-angled {C}oxeter groups},
        date={2001},
        ISSN={1465-3060},
     journal={Geom. Topol.},
      volume={5},
       pages={7\ndash 74},
         url={http://dx.doi.org/10.2140/gt.2001.5.7},
      review={\MR{1812434 (2002e:58039)}},
}

\bib{e13}{misc}{
      author={Edmonds, Allan~L.},
       title={The {E}uler characteristic of a {H}aken $4$-manifold},
        date={2013},
         url={http://arxiv.org/abs/1306.2616},
}

\bib{f07}{thesis}{
      author={Foozwell, Bell},
       title={{H}aken $n$-manifolds},
        type={Ph.D. Thesis},
        date={2007},
}

\bib{f11a}{misc}{
      author={Foozwell, Bell},
       title={The universal covering space of a {H}aken $n$--manifold},
        date={2011},
         url={http://arxiv.org/abs/1108.0474},
}

\bib{fr11}{incollection}{
      author={Foozwell, Bell},
      author={Rubinstein, Hyam},
       title={Introduction to the theory of {H}aken {$n$}-manifolds},
        date={2011},
   booktitle={Topology and geometry in dimension three},
      series={Contemp. Math.},
      volume={560},
   publisher={Amer. Math. Soc., Providence, RI},
       pages={71\ndash 84},
         url={http://dx.doi.org/10.1090/conm/560/11092},
      review={\MR{2866924 (2012i:57040)}},
}

\bib{ll95}{article}{
      author={Lott, John},
      author={L{\"u}ck, Wolfgang},
       title={${L}\sp 2$-topological invariants of $3$-manifolds},
        date={1995},
        ISSN={0020-9910},
     journal={Invent. Math.},
      volume={120},
      number={1},
       pages={15\ndash 60},
      review={\MR{96e:58150}},
}

\bib{m90}{article}{
      author={Mess, Geoffrey},
       title={Examples of {P}oincar{\'e} duality groups},
        date={1990},
        ISSN={0002-9939},
     journal={Proc. Amer. Math. Soc.},
      volume={110},
      number={4},
       pages={1145\ndash 1146},
         url={http://dx.doi.org/10.2307/2047770},
      review={\MR{1019274 (91c:20075)}},
}

\bib{m76}{article}{
      author={Millson, John~J.},
       title={On the first {B}etti number of a constant negatively curved
  manifold},
        date={1976},
     journal={Ann. of Math. (2)},
      volume={104},
      number={2},
       pages={235\ndash 247},
      review={\MR{54 \#10488}},
}

\bib{p02}{misc}{
      author={Perelman, Grisha},
       title={The entropy formula for the {R}icci flow and its geometric
  applications},
        date={2002},
         url={http://arxiv.org/abs/math/0211159},
}

\bib{p03a}{misc}{
      author={Perelman, Grisha},
       title={Finite extinction time for the solutions to the {R}icci flow on
  certain three-manifolds},
        date={2003},
         url={http://arxiv.org/abs/math/0307245},
}

\bib{p03}{misc}{
      author={Perelman, Grisha},
       title={{R}icci flow with surgery on three-manifolds},
        date={2003},
         url={http://arxiv.org/abs/math/0303109},
}

\bib{s09}{article}{
      author={Schroeder, Timothy~A.},
       title={The {$l^2$}-homology of even {C}oxeter groups},
        date={2009},
        ISSN={1472-2747},
     journal={Algebr. Geom. Topol.},
      volume={9},
      number={2},
       pages={1089\ndash 1104},
         url={http://dx.doi.org/10.2140/agt.2009.9.1089},
      review={\MR{2511140 (2010d:20048)}},
}

\end{biblist}
\end{bibdiv}
\end{document}